\newtheorem{theorem}{Theorem}
\newtheorem{lemma}{Lemma}
\newtheorem{corollary}{Corollary}
\newtheorem{prop} {Proposition}
\theoremstyle{definition}
\newtheorem{definition}{Definition}
\theoremstyle{remark}
\newtheorem{remark}{Remark}
\newcommand{\C}{\mathbb C}
\newcommand{\hatC}{\widehat{\C}}
\numberwithin{equation}{section}
\newcommand{\T}{Teich\-m\"ul\-ler}
\begin{document}
	
	\title[Carath\'eodory metric]{Carath\'eodory metric on some generalized Teichm\"uller spaces}
	
	\author{Xinlong Dong}
	\address[Dong] {Department of Mathematics and Computer Science,\\
		Kingsborough Community College,\\
		City University of New York,\\
		Brooklyn, NY 11235--2398, USA}
	
	\email{Xinlong.Dong@kbcc.cuny.edu}
	
	\author{Sudeb Mitra}
	\address[Mitra]{Department of Mathematics,\\
		Queens College of the City University of New York,\\
		Flushing, NY 11367--1597, USA\\
		and\\
		Department of Mathematics\\
		Graduate Center of the City University of New York
		New York, NY 10016, USA}
	
	\email{sudeb.mitra@qc.cuny.edu}
	
	\dedicatory{In memory of Professor Clifford J.~Earle}
	\keywords{ {\T} spaces, Carath\'eodory metric, Kobayashi metric.}
	\subjclass[2020]{Primary 32G15, 32Q45; Secondary 30F60.}

	\begin{abstract}
		We study the Carath\'eodory metric on some generalized Teichm\"uller spaces. Our paper is especially inspired by the papers \cite{E1} and \cite{Miyachi}. In \cite{E1}, Earle showed that the Carath\'eodory metric is complete on any Teichm\"uller space. In \cite{Miyachi}, Miyachi extended this result for Asymptotic Teichm\"uller spaces. We study the completeness of the Carath\'eodory metric on product Teichm\"uller spaces and on the Teichm\"uller space of a closed set in the Riemann sphere. 
	\end{abstract}
	
	\maketitle
	
	\section{Introduction}~\label{section:intro}
	
	The study of Kobayashi and Carath\'eodory metrics on {\T} spaces is an important topic. An important theorem of Royden states that the {\T} and Kobayashi metrics coincide for finite dimensional {\T} spaces; see \cite{R1}. Royden's theorem was extended to all {\T} spaces by Gardiner; see Chapter 14 of \cite{G}. Subsequently, using holomorphic motions, an easy proof was given in the paper \cite{EKK}.

	\medskip
	
	The question of Carath\'eodory metric on {\T} spaces was studied in the important paper \cite{E1}. In that paper, using Bers embedding, Earle showed that the Carath\'eodory metric is complete on {\T} spaces. In that same paper, Earle asked the question whether the Carathéodory metric coincides with the {\T} metric on {\T} spaces. In the fundamental paper \cite{Mar}, Markovi\'c proved that for any closed surface of genus $g \geq 2$, the answer is negative. 
	
	\medskip
	
	In the paper \cite{Miyachi}, Miyachi extended Earle's result to Asymptotic {\T} spaces. 
	
	\medskip
	
	Some other important papers on Kobayashi and Carath\'eodory metrics and their relationship with {\T} theory are \cite{Kra}, \cite{K1}, \cite{K2}, \cite{R1}, and \cite{Shiga}.  Other comprehensive papers on Schwarz's lemma and Kobayashi and Carath\'eodory pseudometrics are \cite{EHaHuM} and \cite{Ha}. 
	
	\medskip
	
	Our present paper is particularly inspired by the techniques used in the papers \cite{E1} and \cite{Miyachi}. We prove that, for a large class of {\T} spaces, that is, the {\it product Teichm\"uller space}, and the {\it Teichm\"uller space of a closed set in the sphere}, the Carath\'eodory metric is complete. These {\T} spaces were first studied by Lieb in his Cornell University doctoral dissertation (see \cite{Li}). Subsequently, they have been extensively studied and used in several papers. They have an intimate relationship with holomorphic motions, tame quasiconformal motions, and with some problems in geometric function theory.  For applications to holomorphic motions, see the papers \cite{M1}, \cite{M2}, \cite{MS}, and also the expository paper \cite{JM1}. For applications to some problems in geometric function theory, see the paper \cite{EM}. For applications to continuous motions and geometric function theory, see the paper \cite{JM2}. A recent application to tame quasiconformal motions is the paper \cite{JMSW}. 
	
	\medskip
	
	Our paper is arranged as follows. In \S2, we state the two main theorems of our paper. In \S3, we summarize the definitions and some important properties of Kobayashi and Carath\'eodory metrics on complex manifolds. In \S4, we give the precise definition of product {\T} spaces and note some properties that will be useful in our paper. In \S5, we prove the first main theorem of our paper. The crucial step is Theorem 1 (in \S5), where we prove an estimate for Carath\'eodory and Kobayashi metrics on {\T} spaces. In \S6, we define the {\T} space of a closed set in the Riemann sphere, and note some properties that are relevant to our paper. In \S7, we prove the second main theorem of our paper. 
	
	\bigskip
	
	{\bf Acknowledgement.} We thank the referee for his/her careful reading of the paper and for his/her helpful comments.

	\section{Statements of the main theorems}
	
	Throughout this paper, we will use $\C$ for the complex plane , $\hatC = \C \cup \{\infty\}$ for the Riemann sphere, and $\Delta = \{z \in \C: \vert z\vert < 1\}$ for the open unit disk. 
	
	\medskip
	
	We state the main theorems of this paper. 
	
	\medskip
	
	For each $i$ in the index set $I$, let $X_i$ be a hyperbolic Riemann surface. Let $X$ be the disjoint union $\coprod_{i \in I} X_i$, and let $Teich(X)$ denote its {\it product Teichm\"uller space}; the precise definition is given in \S4.2.
	
	\medskip
	
	{\bf Theorem A.} The Carath\'eodory metric on $Teich(X)$ is complete. 
	
	\medskip
	
	Let $E$ be a closed subset of $\hatC$ that contains the points $0$, $1$, and $\infty$. Let $T(E)$ denote its {\T} space; see \S6.1 for the precise definition. 
	
	\medskip
	
	{\bf Theorem B.} The Carath\'eodory metric on $T(E)$ is complete. 
	
	\section{Kobayashi and Carath\'eodory metrics}
	
	In this section, we summarize the definitions and some basic properties of the Kobayashi and Carath\'eodory pseudometrics. Let $\rho$ denote the Poincar\'e metric on $\Delta$. We have:
	$$\rho(z_1, z_2) = \tanh^{-1}\Big | \frac{z_1 - z_2}{1 - \bar{z_1}z_2}\Big |.$$

	\subsection{Kobayashi pseudometric}
	
	Let $M$ be a complex manifold. The {\it Kobayashi pseudometric} $K_M$ is defined as follows: Given two points $p$, $q$ $\in M$, we choose points $p = p_0, p_1, \cdot \cdot \cdot, p_{k-1}, p_k = q$ of $M$, points $a_1, \cdot \cdot \cdot, a_k, b_1, \cdot \cdot \cdot, b_k$ of $\Delta$, and holomorphic maps $f_1, \cdot \cdot \cdot, f_k$ of $\Delta$ into $M$ such that $f_i(a_i) = p_{i-1}$ and $f_i(b_i) = p_i$ for $i = 1, \cdot \cdot \cdot, k$. For each choice of points and maps thus made, consider the number
	$$\rho(a_1, b_1) + \cdot \cdot \cdot + \rho(a_k, b_k).$$
	Then, $K_M(p, q)$ is the infimum of the numbers obtained in this manner for all possible choices. 
	
	\medskip
	
	We note some important properties of $K_M$. For proofs, refer to Chapter 4, Section 1 of \cite{Kob}. 
	
	\begin{prop}\label{prop1}
		Let $M$ and $N$ be two complex manifolds and let $f: M \to N$ be a holomorphic map. Then
		$$K_M(p, q) \geq K_N(f(p), f(q)) \qquad \mbox { for } p, q \in M.$$
	\end{prop}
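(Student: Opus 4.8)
The plan is to prove this distance-decreasing property directly from the definition, by pushing forward an arbitrary admissible chain for $p, q$ in $M$ through $f$ and observing that it becomes an admissible chain for $f(p), f(q)$ in $N$ which uses \emph{exactly the same data} in the disk $\Delta$. First I would fix $p, q \in M$ and take any admissible chain as in the definition of $K_M$: points $p = p_0, p_1, \ldots, p_{k-1}, p_k = q$ of $M$, points $a_1, \ldots, a_k, b_1, \ldots, b_k$ of $\Delta$, and holomorphic maps $f_i : \Delta \to M$ with $f_i(a_i) = p_{i-1}$ and $f_i(b_i) = p_i$ for $i = 1, \ldots, k$.

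Next I would compose each $f_i$ with $f$. Since $f$ is holomorphic and a composition of holomorphic maps is holomorphic, each $f \circ f_i : \Delta \to N$ is holomorphic, and it satisfies $(f\circ f_i)(a_i) = f(p_{i-1})$ and $(f \circ f_i)(b_i) = f(p_i)$. Writing $q_j := f(p_j)$, so that $q_0 = f(p)$ and $q_k = f(q)$, the holomorphic maps $f\circ f_1, \ldots, f\circ f_k$, together with the unchanged disk points $a_1, \ldots, a_k, b_1, \ldots, b_k$, form an admissible chain joining $f(p)$ to $f(q)$ in $N$. The crucial point is that the disk points $a_i, b_i$ are reused verbatim, so the number $\rho(a_1, b_1) + \cdots + \rho(a_k, b_k)$ attached to the pushed-forward chain is literally the same number attached to the original chain. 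Since $K_N(f(p), f(q))$ is by definition the infimum of such sums over \emph{all} admissible chains in $N$, and the pushed-forward chain is one such chain, I obtain $K_N(f(p), f(q)) \leq \rho(a_1, b_1) + \cdots + \rho(a_k, b_k)$.

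Finally, since the original chain joining $p$ to $q$ in $M$ was arbitrary, I would take the infimum of the right-hand side over all such chains to conclude that $K_N(f(p), f(q)) \leq K_M(p, q)$, which is the asserted inequality. I do not expect any genuine obstacle here: the only thing requiring care is the observation that composing with $f$ alters only the target side of each map and leaves the disk data $a_i, b_i$ untouched, so that the Poincar\'e lengths $\rho(a_i, b_i)$ are unchanged. Once that is noted, the inequality is immediate from the definition of $K_N$ as an infimum; in particular no Schwarz-type estimate is needed, because the maps into $\Delta$ are never modified.
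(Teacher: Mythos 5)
Your proof is correct and complete: pushing an arbitrary admissible chain forward through $f$, while leaving the disk points $a_i, b_i$ untouched, yields an admissible chain for $f(p), f(q)$ with the same Poincar\'e length, and taking infima gives the inequality. The paper gives no proof of this proposition, deferring instead to Chapter 4, Section 1 of Kobayashi's book; your argument is precisely the standard one found there, so there is nothing to add or correct.
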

	
	\begin{corollary}\label{cor1}
		Every biholomorphic map $f: M \to N$ is an isometry; which means:
		$$K_M(p,q) = K_N(f(p), f(q)) \qquad \mbox { for } p, q \in M.$$
	\end{corollary}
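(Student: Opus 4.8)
The plan is to deduce the equality directly from Proposition~\ref{prop1} by applying the distance-decreasing property in both directions. The key observation is that a biholomorphic map $f \colon M \to N$ comes equipped with a holomorphic inverse $f^{-1} \colon N \to M$, so Proposition~\ref{prop1} applies to both $f$ and $f^{-1}$. This reduces the corollary to combining two opposite inequalities into an equality.

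First I would fix $p, q \in M$ and apply Proposition~\ref{prop1} to the holomorphic map $f$ itself, which immediately yields the inequality $K_M(p,q) \geq K_N(f(p), f(q))$. Next I would apply Proposition~\ref{prop1} to the holomorphic inverse $f^{-1}$, evaluated at the two points $f(p)$ and $f(q)$ of $N$; this gives $K_N(f(p), f(q)) \geq K_M\big(f^{-1}(f(p)), f^{-1}(f(q))\big)$. Since $f^{-1} \circ f$ is the identity on $M$, the right-hand side simplifies to $K_M(p,q)$, so that $K_N(f(p), f(q)) \geq K_M(p,q)$. Chaining the two displayed inequalities forces $K_M(p,q) = K_N(f(p), f(q))$, which is the desired conclusion.

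Since $p$ and $q$ were arbitrary, this establishes the isometry property for all pairs of points. There is essentially no obstacle here: the only point that must be invoked is that $f^{-1}$ is holomorphic, and this is precisely what the hypothesis ``biholomorphic'' supplies, so the corollary follows formally from Proposition~\ref{prop1} with no further estimates required.
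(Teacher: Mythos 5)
Your proof is correct: applying Proposition~\ref{prop1} to both $f$ and its holomorphic inverse $f^{-1}$ and combining the two opposite inequalities is exactly the standard argument. The paper gives no inline proof of this corollary (it defers to Chapter~4, Section~1 of \cite{Kob}), and the proof found there is essentially the same two-sided application of the distance-decreasing property, so your approach matches the intended one.
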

	
	\begin{prop}\label{prop2}
		For the open unit disk $\Delta$, $K_{\Delta}$ coincides with the Poincar\'e metric $\rho$.
	\end{prop}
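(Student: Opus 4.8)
The plan is to establish the equality $K_{\Delta} = \rho$ by proving the two inequalities separately, and the main work is entirely concentrated in the lower bound.

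First I would prove $K_{\Delta}(p,q) \le \rho(p,q)$, which is immediate from the definition. Given $p, q \in \Delta$, consider the single-link chain $p_0 = p$, $p_1 = q$ together with the holomorphic map $f_1 = \mathrm{id}_{\Delta} : \Delta \to \Delta$ and the points $a_1 = p$, $b_1 = q$, so that $f_1(a_1) = p$ and $f_1(b_1) = q$. The number associated to this choice is $\rho(a_1, b_1) = \rho(p,q)$, and since $K_{\Delta}(p,q)$ is by definition the infimum over all admissible chains, we obtain $K_{\Delta}(p,q) \le \rho(p,q)$.

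For the reverse inequality $K_{\Delta}(p,q) \ge \rho(p,q)$, the key tool is the Schwarz--Pick lemma, which asserts that every holomorphic map $f : \Delta \to \Delta$ satisfies $\rho(f(a), f(b)) \le \rho(a,b)$ for all $a, b \in \Delta$. Given an arbitrary admissible chain with points $p = p_0, p_1, \dots, p_k = q$, maps $f_1, \dots, f_k : \Delta \to \Delta$, and points $a_i, b_i \in \Delta$ satisfying $f_i(a_i) = p_{i-1}$ and $f_i(b_i) = p_i$, the Schwarz--Pick lemma gives $\rho(p_{i-1}, p_i) = \rho(f_i(a_i), f_i(b_i)) \le \rho(a_i, b_i)$ for each $i$. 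Summing over $i$ and using the triangle inequality for the Poincar\'e metric yields
$$\rho(p,q) \le \sum_{i=1}^{k} \rho(p_{i-1}, p_i) \le \sum_{i=1}^{k} \rho(a_i, b_i).$$
Taking the infimum over all admissible chains gives $\rho(p,q) \le K_{\Delta}(p,q)$.

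Combining the two inequalities yields $K_{\Delta} = \rho$. The one genuinely nontrivial ingredient is the Schwarz--Pick lemma; everything else is a formal consequence of the definition of the Kobayashi pseudometric together with the triangle inequality. Conceptually, the upper bound shows that the trivial one-link chain using the identity map is always a competitor realizing $\rho(p,q)$, while the Schwarz--Pick lemma shows that no longer chain can produce a smaller total length, so the infimum is attained and equals $\rho(p,q)$.
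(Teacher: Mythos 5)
Your proof is correct: the paper itself gives no argument for this proposition (it defers to Chapter 4, Section 1 of Kobayashi's book), and your two-step argument---the one-link identity chain for the upper bound, Schwarz--Pick plus the triangle inequality applied link-by-link for the lower bound---is precisely the standard proof found there.
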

	
	\medskip
	
	The following fact is stated in \S3 of \cite{Ha}. For the sake of completeness, we include a proof here. 
	
	\begin{prop} \label{prop3}
		Let $B_r(a)$ be the open ball of radius $r$ and center $a$ in a complex Banach space $X$. Then
		$$K_{B_r(a)}(a, x) = \tanh^{-1}\Big(\frac{\Vert x-a\Vert}{r}\Big)$$
		for all $x$ in $B_r(a)$. 
	\end{prop}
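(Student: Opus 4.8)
The plan is to reduce to the unit ball by the isometry-invariance of the Kobayashi metric and then establish matching upper and lower bounds. First I would use that the affine map $y \mapsto (y-a)/r$ is a biholomorphism of $B_r(a)$ onto the open unit ball $B := B_1(0)$ carrying $a$ to $0$ and $x$ to $(x-a)/r$. By Corollary \ref{cor1} the Kobayashi metric is preserved, and $\Vert (x-a)/r\Vert = \Vert x-a\Vert/r$, so it suffices to prove $K_B(0, z) = \tanh^{-1}(\Vert z\Vert)$ for every $z$ in the unit ball. The case $z = 0$ is immediate since both sides vanish, so I may assume $z \neq 0$.

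For the upper bound I would exhibit a single holomorphic disk through the two points. Define $f : \Delta \to B$ by $f(\zeta) = \zeta\, z/\Vert z\Vert$; this is holomorphic, $\Vert f(\zeta)\Vert = |\zeta| < 1$ so the image lies in $B$, and $f(0) = 0$, $f(\Vert z\Vert) = z$. Taking the one-link chain given by $f$ in the definition of the Kobayashi pseudometric yields
$$K_B(0, z) \leq \rho(0, \Vert z\Vert) = \tanh^{-1}(\Vert z\Vert),$$
where the last equality comes from the explicit formula for $\rho$.

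The lower bound is where the Banach-space structure enters, and this is the main point. By the Hahn--Banach theorem there is a continuous linear functional $\ell : X \to \C$ with $\Vert \ell\Vert = 1$ and $\ell(z) = \Vert z\Vert$. Being linear, $\ell$ is holomorphic, and it maps $B$ into $\Delta$ because $|\ell(y)| \leq \Vert y\Vert < 1$ for $y \in B$. Applying Proposition \ref{prop1} to $\ell : B \to \Delta$, together with Proposition \ref{prop2}, gives
$$K_B(0, z) \geq K_{\Delta}(\ell(0), \ell(z)) = \rho(0, \Vert z\Vert) = \tanh^{-1}(\Vert z\Vert).$$
Combining the two inequalities gives $K_B(0, z) = \tanh^{-1}(\Vert z\Vert)$, which by the reduction completes the proof.

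I would expect the only subtle step to be the choice of the supporting functional $\ell$ via Hahn--Banach, which is exactly what forces the lower bound to match the upper bound; the remainder is the standard two-sided squeeze using the distance-decreasing property of Proposition \ref{prop1} and the known value of the Kobayashi metric on $\Delta$ from Proposition \ref{prop2}.
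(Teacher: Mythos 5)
Your proposal is correct and is essentially the paper's own proof: both establish the upper bound via the radial holomorphic disk through $a$ and $x$, and the lower bound via a Hahn--Banach functional $\ell$ with $\Vert\ell\Vert = 1$ peaking at $x-a$, using the distance-decreasing property (Proposition \ref{prop1}) and $K_\Delta = \rho$ (Proposition \ref{prop2}). Your preliminary reduction to the unit ball at the origin is only a cosmetic normalization; the paper instead absorbs the affine change of variables directly into the formulas for the disk $f(t) = a + rt(x-a)/\Vert x-a\Vert$ and the functional $g(y) = \ell(y-a)/r$.
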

	
	\begin{proof}
		Let $x \in B_r(a)$. We may assume that $x \not= a$.  Define a holomorphic map $f: \Delta \to B_r(a)$ as follows:
		$$f(t) = a + \frac{rt(x-a)}{\Vert x-a\Vert}.$$
		Note that 
		$$f\Big(\frac{\Vert x-a\Vert}{r}\Big) = x.$$ 
		By Proposition 1, we have 
		$$K_{B_r(a)}\Big(f(0), f(\frac{\Vert x-a\Vert}{r})\Big) \leq K_{\Delta}\Big(0, \frac{\Vert x-a\Vert}{r}\Big)$$
		which gives
		\begin{equation}\label{upperbound}
			K_{B_r(a)}(a, x) \leq \tanh^{-1}\Big(\frac{\Vert x-a\Vert}{r}\Big).
		\end{equation}
		Next, by Hahn-Banach theorem there exists a linear functional $\ell \in X^{*}$ with $\Vert\ell\Vert = 1$ and $\ell(x-a) = \Vert x-a\Vert$. Define a holomorphic map $g: B_{r}(a) \to \Delta$ given by
		$$g(y) = \frac{\ell(y-a)}{r}.$$
		Again, by Proposition 1, we have 
		$$K_{\Delta}(g(x), g(a)) \leq  K_{B_r(a)}(x,a).$$
		It immediately follows that 
		\begin{equation}\label{lowerbound}
			K_{B_r(a)}(a, x) \geq \tanh^{-1}\Big(\frac{\Vert x-a\Vert}{r}\Big).
		\end{equation}
		Combining inequalities (\ref{upperbound}) and (\ref{lowerbound}) we get the required result. 
	\end{proof}

	\subsection{Carath\'eodory pseudometric}
	
	Let $M$ be a complex manifold. The {\it Carath\'eodory pseudometric} $C_M$ is defined as follows:
	$$C_M(p, q) = \sup_{f}\rho(f(p), f(q)) \qquad \mbox { for } p, q \in M,$$
	where the supremum is taken with respect to the family of holomorphic maps $f: M \to \Delta$.
	
	\medskip
	
	We note some important properties of $C_M$. For proofs, refer to Chapter 4, Section 2 of \cite{Kob}.
	\begin{prop}\label{prop4}
		Let $M$ and $N$ be two complex manifolds and let $f: M \to N$ be a holomorphic map. Then
		$$C_M(p, q) \geq C_N(f(p), f(q)) \qquad \mbox { for } p, q \in M.$$
	\end{prop}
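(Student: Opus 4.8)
The plan is to read off the inequality directly from the defining supremum of the Carath\'eodory pseudometric, using the fact that holomorphic maps into $\Delta$ pull back under $f$ to holomorphic maps into $\Delta$. I would fix $p, q \in M$ and compare the family of competitors for $C_M(p,q)$ with the family of competitors for $C_N(f(p), f(q))$.

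First, I would take an arbitrary holomorphic map $h : N \to \Delta$, which is one of the competitors in the supremum defining $C_N(f(p), f(q))$. Since $f : M \to N$ is holomorphic, the composition $h \circ f : M \to \Delta$ is holomorphic as well, and hence is one of the competitors in the supremum defining $C_M(p,q)$. Therefore
$$C_M(p, q) \;\geq\; \rho\big((h\circ f)(p),\,(h\circ f)(q)\big) \;=\; \rho\big(h(f(p)),\,h(f(q))\big).$$
Next, since this holds for every holomorphic $h : N \to \Delta$, I would pass to the supremum over all such $h$ on the right-hand side. By the definition of $C_N$, that supremum equals $C_N(f(p), f(q))$, yielding
$$C_M(p, q) \;\geq\; C_N(f(p), f(q)),$$
as required.

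I do not expect any genuine obstacle here: the whole argument rests on the single observation that pre-composition with $f$ sends the family of holomorphic maps $N \to \Delta$ into the family of holomorphic maps $M \to \Delta$, so that the supremum defining $C_M$ is taken over a larger family and therefore dominates. The only point needing a moment's care is to verify that the inequality points in the correct direction, namely toward the larger family, but this is immediate once the containment of the two families is noted. The structure mirrors the proof of Proposition \ref{prop1} for the Kobayashi pseudometric, with maps out of the manifolds into $\Delta$ replacing maps from $\Delta$ into the manifolds.
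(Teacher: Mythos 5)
Your proof is correct: the functorial (distance-decreasing) property of the Carath\'eodory pseudometric follows exactly as you argue, by pre-composing competitors $h: N \to \Delta$ with $f$ and passing to the supremum. The paper itself offers no proof of this proposition---it simply cites Chapter 4, Section 2 of \cite{Kob}---and the argument found there is the same standard one you give, so there is nothing to add.
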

	
	\begin{corollary}\label{cor2}
		Every biholomorphic map $f: M \to N$ is an isometry; which means:
		$$C_M(p,q) = C_N(f(p), f(q)) \qquad \mbox { for } p, q \in M.$$
	\end{corollary}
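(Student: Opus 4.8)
The plan is to obtain the desired equality by invoking Proposition \ref{prop4} twice, once for the map $f$ and once for its inverse, so that each direction supplies one of the two inequalities whose combination forces equality. The only structural fact I need is that a biholomorphic map is holomorphic with a holomorphic inverse; everything else is a direct application of the contraction property already established.

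First I would note that, since $f \colon M \to N$ is biholomorphic, it is in particular holomorphic, so Proposition \ref{prop4} applied to $f$ gives
$$C_M(p, q) \geq C_N(f(p), f(q)) \qquad \mbox{for } p, q \in M.$$
Next, let $g = f^{-1} \colon N \to M$, which is holomorphic by the definition of a biholomorphic map. Applying Proposition \ref{prop4} to $g$ at the points $f(p), f(q) \in N$ yields
$$C_N(f(p), f(q)) \geq C_M\big(g(f(p)), g(f(q))\big) = C_M(p, q),$$
where the final equality uses $g \circ f = \mathrm{id}_M$. Combining the two displayed inequalities gives $C_M(p,q) = C_N(f(p), f(q))$, as required.

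I do not anticipate any genuine obstacle here: the statement is a formal corollary of the monotonicity of $C_M$ under holomorphic maps, and the argument is the exact analogue of the one for the Kobayashi pseudometric in Corollary \ref{cor1}. The single point worth stating explicitly is that the inverse $f^{-1}$ is holomorphic, since this is what allows the second application of Proposition \ref{prop4}; without it the reverse inequality would be unavailable.
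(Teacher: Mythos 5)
Your proof is correct and is exactly the standard argument the paper intends: the paper states this corollary as an immediate consequence of Proposition \ref{prop4} (deferring details to Kobayashi's book), and the intended reasoning is precisely the two-sided application of the contraction property to $f$ and to $f^{-1}$. Nothing to add.
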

	
	\begin{prop}\label{prop5}
		For the open unit disk $\Delta$, $C_{\Delta}$ coincides with the Poincar\'e metric $\rho$.
	\end{prop}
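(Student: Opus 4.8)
The plan is to establish the equality $C_\Delta = \rho$ by proving the two opposing inequalities separately, each of which follows from a single well-chosen holomorphic map into $\Delta$.

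First I would obtain the lower bound $C_\Delta(p, q) \geq \rho(p, q)$. Since $C_\Delta(p, q)$ is defined as a supremum over all holomorphic maps $f \colon \Delta \to \Delta$, it suffices to exhibit one such map that already realizes the value $\rho(p, q)$. The natural choice is the identity map $\mathrm{id} \colon \Delta \to \Delta$, which is holomorphic and satisfies $\rho(\mathrm{id}(p), \mathrm{id}(q)) = \rho(p, q)$. Including this map among the competitors in the supremum immediately gives $C_\Delta(p, q) \geq \rho(p, q)$.

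Next I would establish the reverse bound $C_\Delta(p, q) \leq \rho(p, q)$. Here the key ingredient is the Schwarz--Pick lemma, which asserts that every holomorphic self-map $f$ of $\Delta$ is distance-decreasing for the Poincar\'e metric, that is, $\rho(f(p), f(q)) \leq \rho(p, q)$ for all $p, q \in \Delta$. Since this inequality holds for every admissible $f$ in the defining family, the supremum itself is bounded above by $\rho(p, q)$, which is exactly the desired upper bound. Combining the two bounds yields $C_\Delta(p, q) = \rho(p, q)$.

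I do not expect any serious obstacle: the entire content of the proposition is essentially a repackaging of the Schwarz--Pick lemma together with the observation that the identity map lies in the competing family, so the supremum is simultaneously attained by the identity and capped by Schwarz--Pick. The only point meriting a moment's care is keeping the two directions of inequality straight, namely that the identity forces $C_\Delta \geq \rho$ while the distance-decreasing property forces $C_\Delta \leq \rho$; both are immediate once the maps are named.
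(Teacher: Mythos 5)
Your proof is correct and is the standard argument: the identity map gives $C_\Delta \geq \rho$, and the Schwarz--Pick lemma caps the supremum to give $C_\Delta \leq \rho$. The paper itself offers no proof, deferring to Chapter 4, Section 2 of Kobayashi's book, where essentially this same argument appears, so there is nothing to add.
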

	
	\begin{prop}\label{prop6}
		If $M$ and $M'$ are complex manifolds with complete Carath\'eodory metric, so is $M \times M'$.
	\end{prop}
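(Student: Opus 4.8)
The plan is to control $C_{M \times M'}$ from above and below by the Carath\'eodory metrics of the two factors, and then to deduce completeness from that comparison. Write points of $M \times M'$ as pairs $(p,p')$. First I would record the lower bound. The two coordinate projections $\pi \colon M \times M' \to M$ and $\pi' \colon M \times M' \to M'$ are holomorphic, so applying the distance-decreasing property (Proposition~\ref{prop4}) to each of them gives
\begin{equation*}
C_{M \times M'}\big((p,p'),(q,q')\big) \;\geq\; \max\big\{\, C_M(p,q),\; C_{M'}(p',q') \,\big\}.
\end{equation*}
Since $C_M$ and $C_{M'}$ are genuine metrics (this is part of the completeness hypothesis), this inequality already shows that $C_{M \times M'}$ separates points, so it too is a metric and not merely a pseudometric.

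Next I would establish a matching upper bound. For a fixed $q' \in M'$, the slice inclusion $x \mapsto (x, q')$ is a holomorphic map $M \to M \times M'$, and similarly $y \mapsto (q, y)$ is a holomorphic map $M' \to M \times M'$. Applying Proposition~\ref{prop4} to these two maps yields
\begin{equation*}
C_{M \times M'}\big((p,q'),(q,q')\big) \leq C_M(p,q), \qquad C_{M \times M'}\big((q,p'),(q,q')\big) \leq C_{M'}(p',q').
\end{equation*}
Inserting the intermediate point $(q,p')$ and using the triangle inequality for the pseudometric $C_{M \times M'}$ then gives
\begin{equation*}
C_{M \times M'}\big((p,p'),(q,q')\big) \leq C_M(p,q) + C_{M'}(p',q').
\end{equation*}

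These two bounds sandwich $C_{M \times M'}$ between $\max\{C_M, C_{M'}\}$ and $C_M + C_{M'}$, which are bi-Lipschitz comparable; consequently a sequence is $C_{M \times M'}$-Cauchy (respectively, $C_{M \times M'}$-convergent) precisely when both coordinate sequences are Cauchy (respectively, convergent) in their factors. Concretely, given a $C_{M \times M'}$-Cauchy sequence $(p_n, p'_n)$, the lower bound shows that $(p_n)$ and $(p'_n)$ are Cauchy in $(M, C_M)$ and $(M', C_{M'})$; by hypothesis they converge to limits $p$ and $p'$, and then the upper bound gives $C_{M \times M'}\big((p_n,p'_n),(p,p')\big) \leq C_M(p_n,p) + C_{M'}(p'_n,p') \to 0$. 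Hence $(M \times M', C_{M \times M'})$ is complete.

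I do not expect a serious obstacle here. The only points requiring a little care are verifying that the slice inclusions are holomorphic and that the triangle inequality may be applied to the Carath\'eodory pseudometric, both of which are immediate from the definitions. The sharp product formula $C_{M \times M'} = \max\{C_M, C_{M'}\}$ could be proved with additional effort, but it is not needed for this statement: the cruder sandwich already forces completeness.
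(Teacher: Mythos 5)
Your argument is correct, but note that the paper itself contains no proof of Proposition \ref{prop6}: it simply refers to Chapter 4, Section 2 of \cite{Kob}, where the sharper product formula $C_{M\times M'}\bigl((p,p'),(q,q')\bigr)=\max\bigl\{C_M(p,q),\,C_{M'}(p',q')\bigr\}$ is established. What you prove instead is the two-sided sandwich $\max\{C_M,C_{M'}\}\le C_{M\times M'}\le C_M+C_{M'}$, whose lower half comes from Proposition \ref{prop4} applied to the coordinate projections and whose upper half comes from Proposition \ref{prop4} applied to the slice inclusions together with the triangle inequality for the pseudometric. This is a genuine simplification: the sharp upper bound $C_{M\times M'}\le\max\{C_M,C_{M'}\}$ is the only delicate part of Kobayashi's theorem, and your observation that the crude sum bound already suffices to transfer Cauchy sequences and limits between the product and its factors makes the completeness statement self-contained within the paper's own toolkit (only Proposition \ref{prop4} is used). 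Two small remarks: (i) your explicit note that the lower bound forces $C_{M\times M'}$ to separate points, so that ``complete metric'' makes sense for the product, is a detail the citation leaves implicit; (ii) there is a harmless notational slip where you state the slice bound with the second coordinate frozen at $q'$ but apply it along the leg from $(p,p')$ to $(q,p')$, i.e., with the second coordinate frozen at $p'$ --- the inequality is of course the same after relabeling.
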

	
	\begin{prop}\label{prop7}
		Let $B_r(a)$ be the open ball of radius $r$ and center $a$ in a complex Banach space $X$. Then
		$$C_{B_r(a)}(a, x) = \tanh^{-1}\Big(\frac{\Vert x-a\Vert}{r}\Big)$$
		for all $x$ in $B_r(a)$. 
	\end{prop}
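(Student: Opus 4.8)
The plan is to follow the same two-sided strategy as in the proof of Proposition 3, reusing the two explicit holomorphic maps constructed there; the only difference is that the roles of the two bounds are interchanged in difficulty. As before, I would first reduce to the case $x \neq a$, since both sides vanish when $x = a$.

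First I would establish the lower bound, which for the Carath\'eodory metric is the easy direction. By the Hahn--Banach theorem choose $\ell \in X^{*}$ with $\Vert \ell \Vert = 1$ and $\ell(x-a) = \Vert x-a \Vert$, and set $g(y) = \ell(y-a)/r$, a holomorphic map $B_r(a) \to \Delta$. Since $C_{B_r(a)}$ is by definition the supremum of $\rho(f(a), f(x))$ taken over all holomorphic $f : B_r(a) \to \Delta$, this one map $g$ already forces $C_{B_r(a)}(a, x) \geq \rho(g(a), g(x)) = \rho\big(0, \Vert x-a \Vert/r\big) = \tanh^{-1}\big(\Vert x-a \Vert/r\big)$.

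The upper bound is the substantive step. Let $f : B_r(a) \to \Delta$ be an \emph{arbitrary} holomorphic map; I must bound $\rho(f(a), f(x))$ uniformly in $f$. I would precompose with the disk embedding $h(t) = a + rt(x-a)/\Vert x-a \Vert$ used in the proof of Proposition 3, which satisfies $h(0) = a$ and $h\big(\Vert x-a \Vert/r\big) = x$. Then $f \circ h : \Delta \to \Delta$ is holomorphic, so the Schwarz--Pick lemma, that is, Proposition 4 combined with Proposition 5 applied on $\Delta$, gives $\rho(f(a), f(x)) = \rho\big((f \circ h)(0), (f \circ h)(\Vert x-a \Vert/r)\big) \leq \rho\big(0, \Vert x-a \Vert/r\big) = \tanh^{-1}\big(\Vert x-a \Vert/r\big)$. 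Taking the supremum over all such $f$ yields the desired upper bound, and combining it with the lower bound finishes the proof.

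The main obstacle is precisely this upper bound: unlike the lower bound, it is a statement about every competitor $f$ in the defining supremum, so no single cleverly chosen map suffices. The device that makes it routine is factoring each $f$ through the one-dimensional slice $h(\Delta)$, on which the comparison is sharp, after which Schwarz--Pick does the work. Equivalently, one could invoke the general inequality $C_M \leq K_M$ together with Proposition 3, but the factoring argument keeps the proof self-contained within the propositions already stated.
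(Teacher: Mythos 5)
Your proof is correct and is essentially the paper's intended argument: the paper's own treatment of Proposition 7 simply cites Lemma 2 of Earle's paper and notes that "the proof is similar to the proof of Proposition 3," i.e., it uses exactly your two maps (the affine disk slice through $a$ and $x$, and the Hahn--Banach functional). The only cosmetic difference is that you obtain the upper bound by factoring each competitor $f$ through the slice and invoking Schwarz--Pick, whereas one can equivalently apply Proposition 4 directly to the embedding $h \colon \Delta \to B_r(a)$; these amount to the same computation.
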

	
	See Lemma 2 in \cite{E1}. The proof is similar to the proof of Proposition \ref{prop3}.
	
	\begin{corollary}\label{cor3}
		The Carath\'eodory metric induces the standard topology on $B_{r}(a)$.
	\end{corollary}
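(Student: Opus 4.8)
The plan is to show that, for an arbitrary base point $x_0 \in B_r(a)$, the Carath\'eodory distance $C_{B_r(a)}(x_0, x)$ is squeezed between two explicit functions of the norm distance $\Vert x - x_0\Vert$, each a continuous strictly increasing function that vanishes at $0$. Since Proposition \ref{prop7} records only the distance from the \emph{center} $a$, the substance of the argument is to produce such two-sided estimates from an arbitrary point of the ball.

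For the upper bound, I would observe that $B_s(x_0) \subseteq B_r(a)$, where $s = r - \Vert x_0 - a\Vert > 0$, and apply Proposition \ref{prop4} to the holomorphic inclusion $B_s(x_0) \hookrightarrow B_r(a)$ together with Proposition \ref{prop7} applied to the inner ball. This gives
$$C_{B_r(a)}(x_0, x) \leq C_{B_s(x_0)}(x_0, x) = \tanh^{-1}\Big(\frac{\Vert x - x_0\Vert}{s}\Big)$$
for all $x \in B_s(x_0)$, so that small norm distance forces small Carath\'eodory distance.

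For the lower bound, I would imitate the Hahn-Banach construction from the proof of Proposition \ref{prop3}: choose $\ell \in X^{*}$ with $\Vert\ell\Vert = 1$ and $\ell(x - x_0) = \Vert x - x_0\Vert$, and define the holomorphic map $g: B_r(a) \to \Delta$ by $g(y) = \ell(y-a)/r$. By the definition of the Carath\'eodory pseudometric, $C_{B_r(a)}(x_0, x) \geq \rho(g(x_0), g(x))$, and since $\vert g(x) - g(x_0)\vert = \Vert x - x_0\Vert/r$ while $\vert 1 - \overline{g(x_0)}\, g(x)\vert < 2$, the elementary estimate for $\rho$ yields
$$C_{B_r(a)}(x_0, x) \geq \tanh^{-1}\Big(\frac{\Vert x - x_0\Vert}{2r}\Big),$$
so that small Carath\'eodory distance forces small norm distance (and, incidentally, that $C_{B_r(a)}$ separates points, hence is a genuine metric).

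Combining the two bounds shows that, at every $x_0$, the quantities $C_{B_r(a)}(x_0, x)$ and $\Vert x - x_0\Vert$ tend to $0$ simultaneously, which is exactly the assertion that the Carath\'eodory metric induces the standard topology. The only mildly delicate point is the lower bound: the formula of Proposition \ref{prop7} is unavailable away from the center, so one must build the separating functional by hand and absorb the discrepancy between the Poincar\'e and pseudohyperbolic distances into a harmless constant. The crude estimate $\vert 1 - \overline{g(x_0)}\, g(x)\vert < 2$ is wasteful but entirely sufficient, since only behaviour near the diagonal is relevant to the topology.
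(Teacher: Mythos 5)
Your proposal is correct, and it supplies an actual argument where the paper gives none: for this corollary the paper simply points to the Corollary of Lemma 2 in \cite{E1}. Both of your estimates check out. The upper bound $C_{B_r(a)}(x_0,x) \leq \tanh^{-1}\big(\Vert x - x_0\Vert/s\big)$, $s = r - \Vert x_0 - a\Vert$, is exactly Proposition \ref{prop4} applied to the inclusion $B_s(x_0) \hookrightarrow B_r(a)$ together with Proposition \ref{prop7}; and your Hahn--Banach map $g(y) = \ell(y-a)/r$, with the crude estimate $\vert 1 - \overline{g(x_0)}\,g(x)\vert < 2$, does yield $C_{B_r(a)}(x_0,x) \geq \tanh^{-1}\big(\Vert x - x_0\Vert/(2r)\big)$. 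Squeezing $C_{B_r(a)}(x_0,\cdot)$ between two continuous, strictly increasing functions of $\Vert x - x_0\Vert$ vanishing at $0$ is precisely the topological assertion, so the proof is complete. The one point worth flagging is your remark that Proposition \ref{prop7} ``is unavailable away from the center'': it becomes available by recentring \emph{outward} as well as inward. Since $B_r(a) \subseteq B_R(x_0)$ with $R = r + \Vert x_0 - a\Vert$, the same monotonicity argument you used for the upper bound gives $C_{B_r(a)}(x_0,x) \geq C_{B_R(x_0)}(x_0,x) = \tanh^{-1}\big(\Vert x - x_0\Vert/R\big)$, which is sharper than your bound (as $R < 2r$), avoids redoing the Hahn--Banach construction and the Poincar\'e-versus-pseudohyperbolic bookkeeping, and makes the two halves of the proof symmetric --- one inclusion of balls for each direction. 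That nested-balls argument is the one implicit in the paper's citation of Earle. What your route buys instead is self-containedness: it re-derives the lower bound straight from the definition of the Carath\'eodory pseudometric, and it makes explicit along the way that $C_{B_r(a)}$ separates points, hence is a genuine metric.
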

	
	See Corollary of Lemma 2 in \cite{E1}.

	\section{Some properties of product {\T} spaces}
	
	We study some basic properties of product {\T} spaces. The details are given in Sections 7.1 to 7.8 in \cite{EM}. For standard facts on classical {\T} spaces, the reader is referred to the standard references \cite{E2}, \cite{E3}, \cite{G}, \cite{GL}, \cite{Hu}, and \cite{Nag}. 
	
	\subsection{Some complex Banach spaces}
	Let $I$ be an index set. For full generality, in this section we will assume that $I$ is uncountable. For each $i$ in the index set $I$, let $X_i$ be a hyperbolic Riemann surface. Let $X$ be the disjoint union $\coprod_{i \in I} X_i$. We introduce the following important Banach spaces:
	
	\bigskip
	
	By definition, a Beltrami form on $X$ is a tensor $\mu$ whose restriction to each $X_i$ is a bounded measurable Beltrami form $\mu_i$ on $X_i$ with $L^{\infty}$ norm less than some finite constant independent of $i$ in $I$. We define
	$$\Vert\mu\Vert = \sup\{\Vert\mu_i\Vert_{\infty}: i \in I\}.$$
	We denote the Banach space of Beltrami forms on $X$ by $Belt(X)$ and we denote the open unit ball of $Belt(X)$ by $M(X)$. The basepoint of $M(X)$ is its center $0$. 
	
	\bigskip
	
	Let $\pi: \Delta \to R$ be a holomorphic universal covering of the hyperbolic Riemann surface $R$. Every holomorphic quadratic differential $\psi$ on $R$ lifts to a holomorphic quadratic differential $\widetilde\psi(z)dz^2$ on $\Delta$. We say that $\psi$ is bounded if its Nehari norm
	$$\Vert\psi\Vert_{N} = \sup\{\Vert \widetilde\psi(z)\Vert (1 - \vert z\vert^{2})^2: z \in \Delta\}$$
	is finite. 
	
	\bigskip
	
	For each $i$ in $I$ let $X_i^*$ be the conjugate Riemann surface of $X_i$, and let $X^*$ be the disjoint union of the $X_i^*$. Let $\psi$ be a holomorphic quadratic differential on $X^*$. We say that $\psi$ is bounded if its restriction $\psi_i$ to $X_i^*$ is a bounded holomorphic quadratic differential for each $i$ and its Nehari norm
	$$\Vert\psi\Vert_{N} = \sup\{\Vert\psi_i\Vert_{N}: i \in I\}$$
	is finite. We denote the complex Banach space of bounded holomorphic quadratic differentials on $X^*$ by $B(X^*)$.

	\subsection{Product {\T} space}
	For each $i \in I$, let $Teich(X_i)$ be the {\T} space of the Riemann surface $X_i$, let $0_i$ be the basepoint of $Teich(X_i)$ and let $d_i$ be the {\T} metric on $Teich(X_i)$. By definition, the {\T} space $Teich(X)$ is the set of functions $t$ on $I$ such that $t(i)$
	is in $Teich(X_i)$ for each $i$ and the set of numbers $\{d_i (0_i, t(i)): i \in I\}$ is bounded. As usual, we shall write $t_i$ for $t(i)$. The basepoint of $Teich(X)$ is the function $t$ such that $t_i= 0_i$ for each $i$; we shall denote it by $0_{X}$.
	
	The {\T} metric on $Teich(X)$ is defined by
	$$d_T(s, t) = \sup\{d_i(s_i, t_i): i \in I\},$$
	for $s$ and $t$ in $Teich(X)$. 
	Since each metric $d_i$ is complete, the metric $d_{T}$ on $Teich(X)$ is also complete.
	
	\begin{lemma}\label{lem1}
		The {\T} metric on $Teich(X)$ is the same as its Kobayashi metric.
	\end{lemma}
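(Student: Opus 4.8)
The plan is to reduce the statement to the classical case of a single Riemann surface, where the theorem of Royden--Gardiner (extended via holomorphic motions in \cite{EKK}) already asserts that the \T{} metric and the Kobayashi metric agree. Since both $d_T$ and $K_{Teich(X)}$ are defined as suprema over the index set $I$ of quantities attached to the factors $Teich(X_i)$, the whole difficulty lies in commuting the supremum over $I$ with the infimum-over-chains that defines the Kobayashi metric.

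First I would record the classical input as a black box: for each fixed $i \in I$, the \T{} metric $d_i$ on $Teich(X_i)$ coincides with the Kobayashi metric $K_{Teich(X_i)}$. Next I would prove the easy inequality $K_{Teich(X)} \le d_T$. For this, fix $s, t \in Teich(X)$; for each factor I can build a Kobayashi chain from $s_i$ to $t_i$ realizing $d_i(s_i, t_i)$ up to $\varepsilon$, but to get a chain in the product I need these chains to assemble into holomorphic maps $\Delta \to Teich(X)$. The clean way is to use the Teichm\"uller (geodesic) disks: for each $i$ there is a holomorphic map $\Delta \to Teich(X_i)$ along which $d_i$ is the Poincar\'e distance, and the crucial point is that the \emph{same} disk parameter can be used simultaneously across all $i$, because a Beltrami form in $M(X)$ is exactly a bounded family of Beltrami forms on the $X_i$. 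Assembling these into a single holomorphic map $\Delta \to Teich(X)$ and applying Proposition \ref{prop2} yields $K_{Teich(X)}(s,t) \le d_T(s,t)$.

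The reverse inequality $K_{Teich(X)} \ge d_T$ follows from the projections. For each fixed $j \in I$, the coordinate projection $\pi_j : Teich(X) \to Teich(X_j)$ is holomorphic (a Beltrami form on $X$ restricts to one on $X_j$, and this descends to a well-defined holomorphic map on \T{} spaces). By Proposition \ref{prop1} (distance-decreasing property of the Kobayashi metric under holomorphic maps),
$$K_{Teich(X)}(s,t) \ge K_{Teich(X_j)}(\pi_j(s), \pi_j(t)) = d_j(s_j, t_j),$$
using the classical identification in the last equality. Taking the supremum over $j \in I$ gives $K_{Teich(X)}(s,t) \ge \sup_j d_j(s_j, t_j) = d_T(s,t)$, which combines with the previous paragraph to finish.

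The main obstacle is the upper bound $K_{Teich(X)} \le d_T$, and specifically verifying that the per-factor extremal maps can be bundled into a single holomorphic map of $\Delta$ into the product \T{} space with uniformly controlled distortion. The subtlety is that $d_T$ is a supremum, not a sum, so a naive concatenation of chains factor-by-factor does not respect the product structure; one genuinely needs a \emph{common} disk parameter, which is why I would route the argument through the simultaneous Teichm\"uller disks coming from the identification of $M(X)$ with bounded families $\{\mu_i\}$ rather than through abstract chains. Once the holomorphic dependence of the assembled map is checked against the Banach-space structure of $Belt(X)$, the estimate follows from Proposition \ref{prop2}, and no further analytic input is required.
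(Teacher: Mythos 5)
The paper itself gives no argument for this lemma---it simply cites Proposition 7.28 of \cite{EM}---so your proposal must stand on its own, and in outline it is the standard argument: the bound $K_{Teich(X)} \ge d_T$ via the holomorphic coordinate projections $\pi_j$ plus the Royden--Gardiner theorem in each factor, and the bound $K_{Teich(X)} \le d_T$ via a single holomorphic disk in $Teich(X)$ assembled from a bounded family of Beltrami coefficients. Your lower-bound paragraph is complete once you supply the holomorphicity of $\pi_j$, which follows, e.g., from $\widehat{\mathcal B}_j \circ \pi_j = p_j \circ \widehat{\mathcal B}$, where $p_j: B(X^*) \to B(X_j^*)$ is the (norm-decreasing, linear, hence holomorphic) coordinate projection and $\widehat{\mathcal B}$, $\widehat{\mathcal B}_j$ are the Bers embeddings of Corollary \ref{cor4}.

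The upper bound, however, needs two repairs. First, your ``common disk parameter'' mechanism does not work as literally stated: the isometric ({\T}) disk in the $i$-th factor reaches $t_i$ at the parameter value $\Vert\mu_i\Vert_\infty$, and these values genuinely vary with $i$, so the per-factor isometric disks cannot be traversed with one parameter. The fix is to give up isometry of the factor disks: choose $\mu_i \in M(X_i)$ with $\Phi_i(\mu_i) = t_i$ and $\rho(0, \Vert\mu_i\Vert_\infty) \le d_i(0_i, t_i) + \varepsilon \le d_T(0_X, t) + \varepsilon$, set $k = \Vert\mu\Vert = \sup_i \Vert\mu_i\Vert_\infty < 1$ (this is where the identification of $M(X)$ with bounded families enters), and use the single map $f(\lambda) = \Phi(\lambda\mu/k)$, which is holomorphic by Corollary \ref{cor4}(i), satisfies $f(0) = 0_X$ and $f(k) = t$, and gives $K_{Teich(X)}(0_X, t) \le \rho(0, k) \le d_T(0_X, t) + \varepsilon$ by Propositions \ref{prop1} and \ref{prop2}. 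Note also that invoking disks ``along which $d_i$ is the Poincar\'e distance'' is unnecessary and is delicate for infinite-type surfaces $X_i$; near-extremal representatives suffice. Second, this disk argument is anchored at the basepoint $0_X$, while the lemma concerns arbitrary $s, t$: you must first move $s$ to a basepoint using Proposition \ref{prop9} and Remark \ref{rmk1}, checking that the change-of-basepoint biholomorphism $h^*$ preserves both metrics---the Kobayashi metric by Corollary \ref{cor1}, and $d_T$ because each classical $h_i^*$ is a $d_i$-isometry. With these two steps inserted, your proof is correct and is, as far as one can tell, the same argument as the cited Proposition 7.28 of \cite{EM}.
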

	
	See Proposition 7.28 in \cite{EM}.
	
	\medskip

	For each $i \in I$, let $\Phi_i$ be the usual projection of $M(X_i)$ onto $Teich(X_i)$; see, for example, \cite{E2}, \cite{E3}, \cite{Hu}, \cite{Nag} for standard facts on the classical {\T} spaces. By definition $d_i(0_i, t_i) = \inf\{\rho(0, \Vert\mu_i\Vert): \mu_i \in M(X_i) \mbox { and } \Phi_i(\mu_i) = t_i\}$ for each $t \in Teich(X)$, so if $\mu \in M(X)$ then $d_i(0_i, \Phi(\mu_i)) \leq \rho(0, \Vert\mu\Vert)$ for all $i$. We can therefore define the {\it standard projection} $\Phi: M(X) \to Teich(X)$ by the formula
	$$\Phi(\mu)_{i} = \Phi_i(\mu_i), \qquad \mu \in M(X) \mbox { and } i \in I.$$
	It is easy to see that the map $\Phi$ is surjective. 
	
	\begin{definition}\label{def1}
		For each $i \in I$ let $\mathcal B_i: M(X_i) \to B(X_i^*)$ be the classical Bers projection (see \cite{E2}, \cite{E3}, \cite{Hu}, or \cite{Nag}).  The {\it generalized Bers projection} $\mathcal B: M(X) \to B(X^*)$ is defined by the formula $\mathcal B(\mu)_{i} = \mathcal B_i(\mu_i)$, $i$ in $I$ and $\mu$ in $M(X)$. 
	\end{definition}
	
	\begin{definition}\label{def2}
		For each $i \in I$, let $\alpha_i: B(X_i^*) \to L^{\infty}(X_i)$ be the classical Ahlfors-Weill map (see \cite{E1}, \cite{E2}, \cite{GL}, \cite{Hu}, or \cite{Nag}). The {\it generalized Ahlfors-Weill map} $\alpha: B(X^*) \to Belt(X)$ is defined by the formula $\alpha (\psi)_{i} = \alpha_i(\psi_i)$, $i$ in $I$ and $\psi$ in $B(X^*)$. 
	\end{definition}
	
	\begin{prop}\label{prop8}
		The generalized Bers projection $\mathcal B: M(X) \to B(X^*)$ is a holomorphic split submersion with the following properties:
		
		\smallskip
		
		(i) $\mathcal B(0) = 0$ and $\Vert\mathcal B(\mu)\Vert_{N} \leq 6$ for all $\mu$ in $M(X)$;
		
		(ii) for all $\mu$ and $\nu$ in $M(X)$, $\mathcal B(\mu) = \mathcal B(\nu)$ if and only if $\Phi(\mu) = \Phi(\nu)$;
		
		(iii) if $\psi \in B(X^*)$ and $\Vert\psi\Vert_{B(X^*)} < 2$, then $\mathcal B(\alpha(\psi)) = \psi$. 
		
	\end{prop}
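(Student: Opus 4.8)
The plan is to reduce every assertion to the corresponding classical fact about the individual Bers projections $\mathcal B_i\colon M(X_i)\to B(X_i^*)$, and then to control the uniformity in $i$ so that the coordinatewise statements assemble into statements about the Banach spaces $Belt(X)$ and $B(X^*)$ equipped with their supremum norms. The classical inputs I would invoke for each hyperbolic $X_i$ are: $\mathcal B_i$ is a holomorphic split submersion with $\mathcal B_i(0)=0$; the universal bound $\Vert\mathcal B_i(\mu_i)\Vert_N\le 6$ coming from the Kraus--Nehari estimate, whose constant does not depend on $X_i$; the identity $\mathcal B_i(\mu_i)=\mathcal B_i(\nu_i)$ exactly when $\Phi_i(\mu_i)=\Phi_i(\nu_i)$, which is the defining property of the Bers embedding; and the Ahlfors--Weill section property $\mathcal B_i(\alpha_i(\psi_i))=\psi_i$ whenever $\Vert\psi_i\Vert_N<2$, with $\alpha_i(\psi_i)\in M(X_i)$ and $\Vert\alpha_i(\psi_i)\Vert_\infty\le\tfrac12\Vert\psi_i\Vert_N$. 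The point to keep in view throughout is that the constants $6$ and $2$ are universal.

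Properties (i)--(iii) then follow at once from the coordinatewise definition $\mathcal B(\mu)_i=\mathcal B_i(\mu_i)$. For (i), $\mathcal B(0)_i=\mathcal B_i(0)=0$ gives $\mathcal B(0)=0$, while
$$\Vert\mathcal B(\mu)\Vert_N=\sup_{i\in I}\Vert\mathcal B_i(\mu_i)\Vert_N\le 6;$$
the finiteness of this supremum is exactly what is needed to see that $\mathcal B$ actually takes values in $B(X^*)$, so that $\mathcal B$ is well defined. For (ii), $\mathcal B(\mu)=\mathcal B(\nu)$ means $\mathcal B_i(\mu_i)=\mathcal B_i(\nu_i)$ for every $i$, which by the classical fact is equivalent to $\Phi_i(\mu_i)=\Phi_i(\nu_i)$ for every $i$, i.e.\ to $\Phi(\mu)=\Phi(\nu)$. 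For (iii), if $\Vert\psi\Vert_N<2$ then $\Vert\psi_i\Vert_N\le\Vert\psi\Vert_N<2$ for each $i$, so $\alpha(\psi)_i=\alpha_i(\psi_i)\in M(X_i)$ with $\Vert\alpha(\psi)\Vert_\infty\le\tfrac12\Vert\psi\Vert_N<1$, whence $\alpha(\psi)\in M(X)$, and $\mathcal B(\alpha(\psi))_i=\mathcal B_i(\alpha_i(\psi_i))=\psi_i$, giving $\mathcal B(\alpha(\psi))=\psi$.

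It remains to address the two genuinely infinite-dimensional assertions. For holomorphy, I would check that $\mathcal B$ is G\^ateaux holomorphic, which is immediate coordinatewise since each $\mathcal B_i$ is holomorphic, and that it is locally bounded, which is furnished by the uniform bound in (i); a locally bounded G\^ateaux-holomorphic map between open subsets of complex Banach spaces is holomorphic, and the same reasoning applied to $\alpha_i$ shows that $\alpha$ is holomorphic on the ball $\{\Vert\psi\Vert_N<2\}$ in $B(X^*)$. For the split submersion property, the Ahlfors--Weill map supplies the mechanism: by (iii), $\alpha$ is a holomorphic right inverse of $\mathcal B$ over that ball, with $\alpha(0)=0$, so differentiating $\mathcal B\circ\alpha=\mathrm{id}$ at $0$ yields $D\mathcal B(0)\circ D\alpha(0)=\mathrm{id}$. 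Thus $D\mathcal B(0)$ is a surjection whose kernel is complemented by the range of $D\alpha(0)$, which is precisely the split submersion property at the basepoint. To obtain it at an arbitrary $\mu\in M(X)$, I would invoke the transitive family of biholomorphic automorphisms of $M(X)$ given coordinatewise by the right translations associated to the quasiconformal maps with Beltrami coefficients $\mu_i$; these preserve the uniform norm bounds defining $Belt(X)$, and $\mathcal B$ intertwines them with biholomorphisms of $B(X^*)$, so the splitting at $0$ transports to every point.

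The main obstacle is not any single estimate but the passage from coordinatewise facts to Banach-space statements, and in particular the production of a \emph{bounded} right inverse of $D\mathcal B(\mu)$ acting simultaneously in all coordinates. This is exactly where the universality of the Kraus--Nehari bound $6$ and of the Ahlfors--Weill threshold $2$ and norm factor $\tfrac12$ is indispensable: without $i$-independence these constants the relevant suprema could fail to be finite and the local section $\alpha$ could fail to land in $M(X)$ or to have a bounded derivative. Once uniformity is secured, holomorphy and the splitting survive the assembly, and the homogeneity argument globalizing the submersion reduces to checking that the translation automorphisms act compatibly in every coordinate while respecting the supremum-norm structure.
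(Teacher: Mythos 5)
Your treatment of (i)--(iii) is correct and is essentially what the paper does: the paper declares these ``follow immediately from the corresponding statements in the classical case,'' which is precisely your coordinatewise argument, and your emphasis on the $i$-independence of the constants $6$, $2$, and $\tfrac12$ is exactly the point that makes the assembly legitimate. For holomorphy, your two ingredients (coordinatewise holomorphy plus local boundedness) are the right ones, though the clean statement of the lemma you need is: a locally bounded map into $B(X^*)$ that is holomorphic against a separating family of continuous linear functionals (here, coordinate evaluations composed with functionals on each $B(X_i^*)$) is holomorphic; coordinatewise holomorphy alone does not literally give G\^ateaux holomorphy into the sup-normed space. The Ahlfors--Weill argument for the splitting at the basepoint is also fine. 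Note, however, that for the split submersion the paper itself proves nothing: it simply cites Proposition 7.3 of \cite{EM}, so here you are attempting more than the paper records.

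The genuine gap is in your globalization step. The right translations you invoke are \emph{not} automorphisms of $M(X)$: the translation associated to $\mu$ carries $\nu$ to the Beltrami coefficient of $f^{\nu}\circ (f^{\mu})^{-1}$, which lives on $Y_i=f^{\mu_i}(X_i)$, so translation is a biholomorphism of $M(X)$ onto $M(Y)$ for a \emph{different} disjoint union $Y$; correspondingly there is no biholomorphism of $B(X^*)$ that $\mathcal B$ ``intertwines'' with it. What one actually has are two maps with the same fibers, $\mathcal B_X\circ r\colon M(Y)\to B(X^*)$ and $\mathcal B_Y\colon M(Y)\to B(Y^*)$, into different Banach spaces, and sharing fibers does not formally transport the Ahlfors--Weill section of $\mathcal B_Y$ at $0$ into a local section of $\mathcal B_X$ through $\mu$. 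Closing this gap is the real content of the cited result: one needs the change-of-basepoint machinery at the level of the projections, namely the bounded linear pullback operators between $B(Y^*)$ and $B(X^*)$ furnished by the Schwarzian composition formula (with bounds depending only on $K=\sup_i K_i$, hence $i$-uniform), together with an inverse-function-theorem argument showing $\mathcal B_X\circ r\circ\alpha_Y$ is locally biholomorphic at $0$; its local inverse then produces the desired section of $\mathcal B_X$. You also cannot shortcut this by appealing to the biholomorphism $h^*$ of Proposition \ref{prop9}: in the paper's (and \cite{EM}'s) logical order that proposition comes \emph{after} Proposition \ref{prop8} and Corollary \ref{cor4}, because the complex structure on $Teich(X)$ needed even to state it is produced by the split submersion you are trying to prove.
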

	
	Statements (i), (ii), and (iii) follow immediately from the corresponding statements in the classical case (see \cite{E2}, \cite{E3}, \cite{GL}, \cite{Hu}, or \cite{Nag}). The fact that $\mathcal B$ is a holomorphic split submersion is proved in Proposition 7.3 in \cite{EM}.
	
	\begin{corollary}\label{cor4}
		There is a unique complex Banach manifold structure on $Teich(X)$ that has the following properties:
		
		\smallskip
		
		(i) the map $\Phi: M(X) \to Teich(X)$ is a holomorphic split submersion;
		
		(ii) the map $\widehat{\mathcal B}: Teich(X) \to \mathcal{B}(M(X))$ such that $\widehat{\mathcal B} \circ \Phi = \mathcal B$ is biholomorphic,
		
		(iii) if $t \in Teich(X)$ and $\Vert\widehat{\mathcal B}(t)\Vert_{B(X^*)} < 2$, then $\Phi(\alpha(\widehat{\mathcal B}(t))) = t$. 
	\end{corollary}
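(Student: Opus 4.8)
The plan is to obtain the complex structure on $Teich(X)$ by transporting the one that $\mathcal{B}(M(X))$ carries as an open subset of the Banach space $B(X^*)$. First I would observe that, since Proposition \ref{prop8} asserts that $\mathcal{B}: M(X) \to B(X^*)$ is a holomorphic split submersion, its image $N := \mathcal{B}(M(X))$ is an open subset of $B(X^*)$ and is therefore a complex Banach manifold in its own right. Next, by part (ii) of Proposition \ref{prop8}, the maps $\mathcal{B}$ and $\Phi$ have exactly the same fibers: $\mathcal{B}(\mu) = \mathcal{B}(\nu)$ if and only if $\Phi(\mu) = \Phi(\nu)$. Since $\Phi$ is surjective, this guarantees a well-defined bijection $\widehat{\mathcal{B}}: Teich(X) \to N$ characterized by the relation $\widehat{\mathcal{B}} \circ \Phi = \mathcal{B}$; concretely, $\widehat{\mathcal{B}}(t) = \mathcal{B}(\mu)$ for any $\mu \in M(X)$ with $\Phi(\mu) = t$, and part (ii) makes this independent of the choice of $\mu$. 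I would then declare $\widehat{\mathcal{B}}$ to be a biholomorphism, i.e.\ equip $Teich(X)$ with the complex Banach manifold structure that pulls the structure of $N$ back along $\widehat{\mathcal{B}}$. Property (ii) then holds by construction.

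With this structure in place, properties (i) and (iii) follow quickly. For (i), the relation $\widehat{\mathcal{B}} \circ \Phi = \mathcal{B}$ gives $\Phi = \widehat{\mathcal{B}}^{-1} \circ \mathcal{B}$, a composition of the holomorphic split submersion $\mathcal{B}$ with the biholomorphism $\widehat{\mathcal{B}}^{-1}$; such a composition is again a holomorphic split submersion. For (iii), suppose $t \in Teich(X)$ satisfies $\Vert \widehat{\mathcal{B}}(t)\Vert_{B(X^*)} < 2$, and put $\psi = \widehat{\mathcal{B}}(t)$. Part (iii) of Proposition \ref{prop8} yields $\mathcal{B}(\alpha(\psi)) = \psi$ (in particular $\alpha(\psi) \in M(X)$), and applying $\widehat{\mathcal{B}} \circ \Phi = \mathcal{B}$ to $\alpha(\psi)$ gives $\widehat{\mathcal{B}}(\Phi(\alpha(\psi))) = \psi = \widehat{\mathcal{B}}(t)$; the injectivity of $\widehat{\mathcal{B}}$ then forces $\Phi(\alpha(\widehat{\mathcal{B}}(t))) = t$.

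The one point requiring genuine care is the uniqueness assertion, and this is where the word \emph{split} does the work. Suppose $Teich(X)$ carried two complex Banach manifold structures, each making $\Phi$ a holomorphic split submersion; I would show the identity map between them is biholomorphic. The key is that a split submersion admits local holomorphic sections: near any point of $Teich(X)$ (in the first structure) there is a neighborhood $U$ and a holomorphic map $\sigma: U \to M(X)$ with $\Phi \circ \sigma = \mathrm{id}_U$. Writing $\mathrm{id}|_U = \Phi \circ \sigma$ and noting that $\sigma$ is holomorphic for the first structure while $\Phi$ is holomorphic for the second, one sees that the identity $U \to Teich(X)$ is holomorphic from the first structure to the second; the symmetric argument gives the reverse, so the two structures coincide. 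I expect this local-section argument to be the main (though standard) obstacle, since it is precisely the place where one must invoke the splitting rather than merely the submersion property.
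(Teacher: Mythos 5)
Your proposal is correct and takes essentially the same route as the paper's own treatment: the paper simply cites Corollary 7.4 of \cite{EM}, and the argument there is precisely yours, namely transporting the complex structure of the open set $\mathcal{B}(M(X)) \subset B(X^*)$ to $Teich(X)$ along the bijection $\widehat{\mathcal B}$ furnished by Proposition \ref{prop8}(ii), with (i) and (iii) read off from the factorization $\Phi = \widehat{\mathcal B}^{-1} \circ \mathcal B$ and Proposition \ref{prop8}(iii). Your uniqueness argument via local holomorphic sections of the split submersion is the standard one and is exactly where the splitting hypothesis is used in the cited proof as well.
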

	
	See Corollary 7.4 in \cite{EM}. 
	
	\begin{definition}\label{def3}
		The biholomorphic map $\widehat{\mathcal B}$ is called the {\it generalized Bers embedding} of $Teich(X)$ in $B(X^*)$.
	\end{definition}

	\begin{definition}\label{def4}
		The {\it generalized Ahlfors-Weill section of $\mathcal B$} is the restriction of the map $\alpha$ to the set of $\psi$ in $B(X^*)$ with $\Vert\psi\Vert_{B(X^*)} < 2$. 
		
	\end{definition}
	
	\subsection{Changing the basepoint}
	For each $i \in I$, let $h_i$ be a $K$-quasiconformal mapping of $X_i$ onto a hyperbolic Riemann surface $Y_i$, with $K$ independent of $i$. Let $Y = \coprod Y_i$ be the disjoint union.
	Each $h_i$ induces a biholomorphic map $h_i^{*}$ of $Teich(X_i)$ onto $Teich(Y_i)$. 
	
	\begin{prop}\label{prop9}
		There is a unique biholomorphic map $h$ of $Teich(X)$ onto $Teich(Y)$ such that  $h^{*}(t)_i = h_i^{*}(t_i)$ for all $t$ in $Teich(X)$ and $i$ in $I$. Furthermore, if $\mu$ is the point in $M(X)$ such that $\mu_i$ is the Beltrami coefficient of $h_i$, then $h^{*}$ maps the point $\Phi(\mu)$ in $Teich(X)$ to the basepoint $0_Y$ of $Teich(Y)$; here $\Phi: M(X) \to Teich(X)$ is the standard projection.
	\end{prop}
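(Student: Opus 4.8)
The plan is to define the required map coordinatewise and to check well-definedness, uniqueness, bijectivity, and biholomorphy in turn; only the last is substantial. First I would set $h^{*}(t)_{i} = h_{i}^{*}(t_{i})$ for $t \in Teich(X)$ and $i \in I$, which forces uniqueness. To see $h^{*}(t) \in Teich(Y)$, recall that each $h_{i}^{*}$ is biholomorphic, hence a Kobayashi isometry by Corollary \ref{cor1}, and that the Kobayashi metric agrees with $d_{i}$ on the classical space $Teich(X_{i})$ by Royden's theorem; writing $d_{i}^{Y}$ and $0_{i}^{Y}$ for the {\T} metric and basepoint of $Teich(Y_{i})$, the triangle inequality gives $d_{i}^{Y}(0_{i}^{Y}, h_{i}^{*}(t_{i})) \le d_{i}^{Y}(0_{i}^{Y}, h_{i}^{*}(0_{i})) + d_{i}(0_{i}, t_{i})$. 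Since $h_{i}^{*}(0_{i})$ is the class of $h_{i}^{-1}$, whose Beltrami coefficient has norm at most $k = (K-1)/(K+1)$, the first term is at most $\rho(0, k)$, uniformly in $i$ because $K$ is independent of $i$; the second is bounded because $t \in Teich(X)$. Hence $\{d_{i}^{Y}(0_{i}^{Y}, h_{i}^{*}(t_{i})) : i \in I\}$ is bounded and $h^{*}(t) \in Teich(Y)$. Applying the same construction to the family $\{h_{i}^{-1}\}$, which is again uniformly $K$-quasiconformal, produces a two-sided inverse, so $h^{*}$ is a bijection.

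For biholomorphy I would lift to the level of Beltrami coefficients and descend through the split submersion $\Phi$. Let $w^{\nu_{i}}$ be a quasiconformal map on $X_{i}$ with Beltrami coefficient $\nu_{i}$, so $\Phi_{i}(\nu_{i}) = [w^{\nu_{i}}]$, and let $\tau_{i}$ be the Beltrami coefficient of $h_{i}$. Using the standard composition formula for Beltrami coefficients, I would define $\widetilde{h}_{i} : M(X_{i}) \to M(Y_{i})$ so that $\widetilde{h}_{i}(\nu_{i})$ is the Beltrami coefficient of $w^{\nu_{i}} \circ h_{i}^{-1}$; explicitly, with $\theta_{i} = \overline{(h_{i})_{z}}/(h_{i})_{z}$, this is the composition of the shift $\nu_{i} \mapsto (\nu_{i} - \tau_{i})/(1 - \overline{\tau_{i}}\,\nu_{i})$, multiplication by $\theta_{i}^{-1}$, and precomposition with $h_{i}^{-1}$, and it satisfies $\Phi_{i}^{Y} \circ \widetilde{h}_{i} = h_{i}^{*} \circ \Phi_{i}$. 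Assembling coordinatewise gives $\widetilde{h} : M(X) \to M(Y)$, $\widetilde{h}(\nu)_{i} = \widetilde{h}_{i}(\nu_{i})$, with $\Phi^{Y} \circ \widetilde{h} = h^{*} \circ \Phi$. Granting (see next paragraph) that $\widetilde{h}$ is a well-defined holomorphic map of balls, the descent is immediate: by Corollary \ref{cor4} each $t_{0} \in Teich(X)$ has a neighborhood $U$ carrying a holomorphic local section $s : U \to M(X)$ with $\Phi \circ s = \mathrm{id}_{U}$, and then $h^{*} = \Phi^{Y} \circ \widetilde{h} \circ s$ on $U$ is holomorphic. Running the same argument for $\{h_{i}^{-1}\}$ shows the inverse is holomorphic, so $h^{*}$ is biholomorphic. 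The final claim also drops out: taking $\nu = \mu$ with $\mu_{i} = \tau_{i}$ makes the shift vanish, so $\widetilde{h}(\mu) = 0$ and $h^{*}(\Phi(\mu)) = \Phi^{Y}(0) = 0_{Y}$.

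The main obstacle is the holomorphy of $\widetilde{h}$ as a map of Banach balls, and this is exactly where uniform quasiconformality is used. For well-definedness and local boundedness I would note that $w^{\nu_{i}} \circ h_{i}^{-1}$ has maximal dilatation at most $\frac{1 + \Vert\nu\Vert}{1 - \Vert\nu\Vert}\,K$, uniformly in $i$, so $\Vert\widetilde{h}(\nu)\Vert < 1$ and $\widetilde{h}$ carries each ball $\{\Vert\nu\Vert \le r\}$ into a fixed smaller ball. For holomorphy I would expand the shift as the power series $(\nu_{i} - \tau_{i})\sum_{n \ge 0}(\overline{\tau_{i}}\,\nu_{i})^{n}$; on $\{\Vert\nu\Vert \le r\}$ its $n$-th term is bounded in sup-norm by $(r + k)(kr)^{n}$, a Weierstrass majorant that is independent of $i$ since $\Vert\tau_{i}\Vert_{\infty} \le k$. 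Thus the shift is a uniform limit of homogeneous polynomial maps and is holomorphic on $Belt(X)$, while multiplication by the modulus-one factors $\theta_{i}^{-1}$ and precomposition by the $h_{i}^{-1}$ assemble into bounded linear, hence holomorphic, maps; composing these gives that $\widetilde{h}$ is holomorphic. The essential point throughout is that a single constant $K$ controls every coordinate, so the estimates, and hence the convergence, are uniform across the (uncountable) index set $I$.
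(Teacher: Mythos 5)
Your proof is correct. The paper itself gives no argument for this proposition---it simply refers the reader to Proposition 7.9 of \cite{EM}---and your construction (coordinatewise definition forced by the formula, the uniform-$K$ bound to show $h^{*}(t)$ lands in $Teich(Y)$, the lift $\widetilde{h}$ to Beltrami coefficients via the composition formula with Weierstrass estimates that are uniform over $I$, and descent through local holomorphic sections of the split submersion $\Phi$) is essentially the standard argument carried out in that reference. Two cosmetic points: since the $X_i$ may be of infinite type, the identification of Kobayashi and Teichm\"uller metrics you invoke is the Royden--Gardiner theorem rather than Royden's theorem proper (or, more simply, avoid it altogether by noting that $h_i^{*}:[f]\mapsto[f\circ h_i^{-1}]$ preserves Teichm\"uller distance exactly, because $(f\circ h_i^{-1})\circ(g\circ h_i^{-1})^{-1}=f\circ g^{-1}$); and the terms $(\nu_i-\tau_i)(\overline{\tau_i}\nu_i)^{n}$ of your expansion are polynomial rather than homogeneous maps, yielding holomorphy of the shift on the ball $M(X)$ (indeed on $\{\Vert\nu\Vert<1/k\}$), not on all of $Belt(X)$.
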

	
	For a proof, see Proposition 7.9 in \cite{EM}.

	\begin{remark}\label{rmk1}
		For any $\mu$ in $M(X)$ and any $i$ in $I$ there are a Riemann surface $Y_i$ and a quasiconformal mapping of $X_i$ onto $Y_i$ whose Beltrami coefficient is $\mu_i$. Therefore each point $\Phi(\mu)$ in $Teich(X)$ can be mapped to the basepoint of some $Teich(Y)$ by some biholomorphic map $h^{*}$. 
	\end{remark}
	
	\section{Proof of Theorem A}
	For each $i$ in the index set $I$, let $X_i$ be a hyperbolic Riemann surface. Let $X$ be the disjoint union $\coprod_{i \in I} X_i$. Let $Teich(X)$ be the product {\T} space discussed in \S4. Let $C_T$ and $K_T$ respectively denote the Carath\'eodory and Kobayashi metrics on $Teich(X)$.
	
	\medskip
	
	Let $0$ denote the origin of the complex Banach space $B(X^*)$ in \S4.1. To simplify notations, let $B_2(0)$ denote the ball of radius 2 centered at the origin in $B(X^*)$, and let $B_6(0)$ denote the ball of radius 6 centered at the origin in $B(X^*)$. Let $C_{B_6(0)}$ denote the Carath\'eodory metric on $B_6(0)$, and let $K_{B_2(0)}$ denote the Kobayashi metric on $B_2(0)$.

	\begin{theorem}\label{thm1}
		$\tanh C_T(x,y) \leq \tanh K_T(x,y) \leq 3\tanh C_T(x,y)$ for all $x$, $y$ in $Teich(X)$.
	\end{theorem}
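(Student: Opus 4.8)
The plan is to prove the two inequalities separately. Since $\tanh$ is increasing, the left inequality $\tanh C_T(x,y)\le\tanh K_T(x,y)$ is equivalent to the general comparison $C_T(x,y)\le K_T(x,y)$, which holds on every complex manifold. I would establish it directly from the definitions: given any holomorphic $g:Teich(X)\to\Delta$ and any admissible Kobayashi chain $f_1,\dots,f_k$ with points $a_i,b_i\in\Delta$, the Schwarz--Pick lemma applied to each $g\circ f_i:\Delta\to\Delta$ together with the triangle inequality for $\rho$ gives $\rho(g(x),g(y))\le\sum_i\rho(a_i,b_i)$; taking the infimum over chains and the supremum over $g$ yields $C_T\le K_T$. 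So I would dispose of the left inequality first and concentrate entirely on the right one.

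For the right inequality I would first reduce to the case in which $x$ is the basepoint. Writing $x=\Phi(\mu)$ for some $\mu\in M(X)$ (possible since $\Phi$ is surjective), Proposition~\ref{prop9} and Remark~\ref{rmk1} furnish a biholomorphic map $h^{*}$ of $Teich(X)$ onto the product {\T} space $Teich(Y)$ of some $Y=\coprod Y_i$, carrying $x$ to the basepoint $0_Y$. Because biholomorphic maps are isometries for both metrics (Corollaries~\ref{cor1} and~\ref{cor2}), it suffices to prove $\tanh K_T(0_Y,h^{*}(y))\le 3\tanh C_T(0_Y,h^{*}(y))$; and since $Teich(Y)$ is again a product {\T} space, all the structure of \S4 is available there with the same universal constants $6$ and $2$. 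Thus I may assume $x=0_X$, in which case $\widehat{\mathcal B}(0_X)=\mathcal B(0)=0$ is the common center of the balls $B_2(0)$ and $B_6(0)$.

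Next I would produce a lower bound for $C_T$ and an upper bound for $K_T$, both in terms of $r:=\Vert\widehat{\mathcal B}(y)\Vert_{B(X^*)}$. For the lower bound, the generalized Bers embedding $\widehat{\mathcal B}:Teich(X)\to B_6(0)$ is holomorphic with image in $B_6(0)$ by Proposition~\ref{prop8}(i), so Proposition~\ref{prop4} and the explicit formula of Proposition~\ref{prop7} give $C_T(0_X,y)\ge C_{B_6(0)}(0,\widehat{\mathcal B}(y))=\tanh^{-1}(r/6)$, that is, $\tanh C_T(0_X,y)\ge r/6$. For the upper bound I would split into two cases. If $r<2$, then $\widehat{\mathcal B}(y)\in B_2(0)$ and the holomorphic map $\Phi\circ\alpha:B_2(0)\to Teich(X)$ (the generalized Ahlfors--Weill section followed by the standard projection) sends $0\mapsto 0_X$ and $\widehat{\mathcal B}(y)\mapsto y$ by Corollary~\ref{cor4}(iii); Propositions~\ref{prop1} and~\ref{prop3} then yield $K_T(0_X,y)\le K_{B_2(0)}(0,\widehat{\mathcal B}(y))=\tanh^{-1}(r/2)$, i.e. $\tanh K_T(0_X,y)\le r/2$. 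If instead $r\ge 2$, then $r/2\ge 1>\tanh K_T(0_X,y)$, so the same bound holds trivially. In either case $\tanh K_T(0_X,y)\le r/2=3(r/6)\le 3\tanh C_T(0_X,y)$, as required.

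The one genuinely delicate point is that the Ahlfors--Weill section is defined only on the radius-$2$ ball, whereas the Bers image can have norm up to $6$; the resolution I would emphasize is the elementary observation that once $r\ge 2$ the desired upper bound on $\tanh K_T$ is automatic, since $\tanh$ is bounded by $1$, so no global section is needed. I would also record carefully that $\Phi\circ\alpha$ is holomorphic on $B_2(0)$ (the classical Ahlfors--Weill section is holomorphic and the generalized one is formed coordinatewise), since the whole upper bound rests on applying Proposition~\ref{prop1} to it. The constant $3$ is then precisely the ratio $6/2$ of the two radii.
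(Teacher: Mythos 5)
Your proposal is correct and is essentially the paper's own proof: the same reduction to the basepoint via Remark~\ref{rmk1} and biholomorphic invariance, the same lower bound $\tanh C_T(0_X,y)\ge r/6$ from the Bers embedding into $B_6(0)$, and the same upper bound $\tanh K_T(0_X,y)\le r/2$ from the Ahlfors--Weill section on $B_2(0)$ with the trivial case $r\ge 2$ handled by $\tanh\le 1$. The only (harmless) differences are cosmetic: you perform the basepoint reduction first rather than last, and you name the maps $\widehat{\mathcal B}$ and $\Phi\circ\alpha$ explicitly where the paper writes the inclusions $B_2(0)\subset Teich(X)\subset B_6(0)$.
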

	
	\begin{proof}  \underline{Step 1.}  Let $x = 0$; to simplify notations, we use $0$ for the basepoint of $Teich(X)$.
		
		We know that 
		$$C_T(0,t) \leq K_T(0,t) \qquad \mbox { for all } t \in Teich(X)$$
		and so we have $\tanh C_T(0,t) \leq \tanh K_T(0,t)$ for all $t$ in $Teich(X)$. 
		
		\smallskip
		
		By the generalized Bers embedding discussed in Proposition \ref{prop8} and Corollary \ref{cor4}, we know that $B_2(0) \subset Teich(X) \subset B_6(0)$. Hence we have $C_T(0,t) \geq C_{B_6(0)}(0,t)$ and therefore,
		$$C_T(0,t) \geq \tanh^{-1} \frac{\Vert t\Vert}{6} \qquad \mbox { for all } t \in Teich(X).$$
		
		It follows that
		\begin{equation}\label{lowerbound2}
			6\tanh C_T(0, t) \geq \Vert t\Vert
		\end{equation} 
		for all $t \in Teich(X)$. 
		
		\smallskip
		
		We also have $K_T(0,t) \leq K_{B_2(0)}(0,t)$ which implies that 
		$$K_T(0, t) \leq \tanh ^{-1}\frac{\Vert t\Vert}{2} \qquad \mbox { if } \Vert t\Vert < 2.$$
		It follows that
		$$2\tanh K_T(0, t) \leq \Vert t\Vert \qquad \mbox { if } \Vert t\Vert < 2.$$
		
		This is also true if $\Vert t\Vert > 2$. It follows that 
		\begin{equation}\label{upperbound2}
			2\tanh K_T(0, t) \leq \Vert t\Vert
		\end{equation}
		for all $t \in Teich(X)$. 
		
		\smallskip
		
		Combining (\ref{lowerbound2}) and (\ref{upperbound2}), we get 
		$$2\tanh K_T(0,t) \leq \Vert t\Vert \leq 6\tanh C_T(0,t)$$
		for all $t \in Teich(X)$; hence,
		$$\tanh K_T(0,t) \leq 3\tanh C_T(0,t)$$
		for all $t \in Teich(X)$. 
		
		\medskip
		
		\underline{Step 2.} Let $x$, $y$ be any points in $Teich(X)$. By Remark \ref{rmk1}, there exists a biholomorphic map $h^{*}$ on $Teich(X)$ onto some {\T} space $Teich(Y)$ such that $h^{*}$ maps the point $x$ in $Teich(X)$ to the basepoint $0_Y$ of $Teich(Y)$. Let $h^{*}(y) = t \in Teich(Y)$. 
		
		By Step 1, we have 
		$$\tanh C_T(0_Y, t) \leq \tanh K_T(0_Y, t) \leq 3 \tanh C_T(0_Y, t) \mbox { for all } t \in Teich(Y),$$
		where $C_T$ denotes the Carathéodory metric on $Teich(Y)$ and $K_T$ denotes the Kobayashi metric on $Teich(Y)$.
		
		Since $h^{*}$ is biholomorphic, the Kobayashi and Carathéodory metrics on $Teich(X)$ and $Teich(Y)$ respectively, are preserved. It follows that 
		$$\tanh C_T(x,y) \leq \tanh K_T(x,y) \leq 3\tanh C_T(x,y)$$ 
		for all $x$, $y$ in $Teich(X)$.
	\end{proof}

	{\bf Proof of Theorem A.}  Let $\{t_n\}$ be a Cauchy sequence in $Teich(X)$, with respect to the Carathéodory metric $C_T$. 
	
	Let $\epsilon > 0$ be given. Choose 
	$$\widehat\epsilon = \tanh^{-1}\Big(\frac{1}{3}\tanh\epsilon\Big).$$
	
	Then, for this $\widehat\epsilon > 0$, there exists a positive integer $N$ such that for all $m, n > N$, we have $C_T(t_m, t_n) < \widehat\epsilon$. Hence, $3 \tanh C_T(t_m, t_n) < 3 \tanh \widehat\epsilon$ for all $m, n > N$. It follows from Theorem \ref{thm1} that 
	$$\tanh K_T(t_m, t_n) < 3 \tanh \widehat\epsilon = 3\big(\frac{1}{3}\tanh\epsilon\big) = \tanh\epsilon$$
	for all $m , n > N$. 
	
	Therefore, $K_T(t_m, t_n) < \epsilon$ for all $m, n > N$. 
	Hence, $\{t_n\}$ is a Cauchy sequence with respect to $K_T$. By Lemma \ref{lem1}, it follows that $\{t_n\}$ is a Cauchy sequence with respect to the {\T} metric $d_T$. Since $d_T$ is complete,  $t_n \to t$ in $Teich(X)$, and by Lemma \ref{lem1} again, $t_n \to t$ in $Teich(X)$ with respect to $K_T$, and so, we have $K_T(t_n, t) \to 0$. 
	
	\smallskip
	
	Let $\widetilde\epsilon > 0$ be given. There exists a natural number $\widetilde N> 0$ such that $K_T(t_n, t) < \widetilde\epsilon$ for all $n > \widetilde N$. Hence, $\tanh K_T(t_n, t) < \tanh \widetilde\epsilon$ for all $n > \widetilde N$. It follows by Theorem \ref{thm1} that
	$\tanh C_T(t_n, t)  < \tanh \widetilde\epsilon$ for all $n > \widetilde N$. Therefore, $C_T(t_n, t) < \widetilde\epsilon$ for all $n > \widetilde N$. It follows that $t_n \to t$ in $Teich(X)$ with respect to $C_T$. Hence, the Carathéodory metric on $Teich(X)$ is complete. \qed
	
	\begin{remark}\label{rmk2}
		Let $S$ be a hyperbolic Riemann surface, and let $Teich(S)$ denote its usual {\T} space. In \cite{E1}, Earle proved that the Carathéodory metric on Teich(S) is complete. Our method gives an alternative proof of Earle's theorem; in particular the estimate on the right-hand side of Theorem \ref{thm1} is explicit. 
	\end{remark}
	
	\begin{remark}\label{rmk3}
		If the index set is finite, then $Teich(X)$ is simply the (finite) cartesian product of the {\T} spaces $Teich(X_i)$. In this case, Theorem A follows from the main theorem in Earle's paper \cite{E1} and Proposition 6.
		
	\end{remark}

	\section{Teichm\"uller space of a closed set in the Riemann sphere}
	
	Recall that a homeomorphism of $\hatC$ is called {\it normalized} if it fixes the points $0$, $1$, and $\infty$. We use $M(\C)$ to denote the open unit ball of the complex Banach space $L^{\infty}(\C)$. Each $\mu$ in $M(\C)$ is the Beltrami coefficient of a unique normalized quasiconformal homeomorphism $w^{\mu}$ of $\hatC$ onto itself. The basepoint of $M(\C)$ is the zero function. 
	
	\smallskip
	
	The Kobayashi metric $K_{M(\C)}$ on $M(\C)$ is defined by
	
	$$K_{M(\C)} (\mu, \nu) = \tanh^{-1}\Vert(\mu - \nu)(1 - \overline{\mu}\nu)^{-1}\Vert_{\infty}$$
	for all $\mu$, $\nu$ in $M(\C)$.
	
	\subsection{Teichm\"uller space of a closed set in the Riemann sphere} Let $E$ be a closed subset of $\hatC$ that contains the points $0$, $1$, and $\infty$.
	
	\begin{definition}\label{def5}
		Two normalized quasiconformal self-mappings $f$ and $g$ of $\hatC$ are said to be $E$-equivalent if and only if $f^{-1} \circ g$ is isotopic to the identity rel $E$. The {\it Teichm\"uller space $T(E)$} is the set of all $E$-equivalence classes of normalized quasiconformal self-mappings of $\hatC$.  The basepoint of $T(E)$ is the $E$-equivalence class of the identity map. 
	\end{definition}
	
	We can define the quotient map 
	$$P_E: M(\C) \to T(E)$$
	by setting $P_E(\mu)$ equal to the $E$-equivalence class of $w^{\mu}$, written as $[w^{\mu}]_E$. Clearly, $P_E$ maps the basepoint of $M(\C)$ to the basepoint of $T(E)$. 
	
	\smallskip
	
	The {\T} metric $d_{T(E)}$ on $T(E)$ is given by
	$$d_{T(E)}(P_E(\mu), t) = \inf \{K_{M(\C)}(\mu, \nu): \nu \in M(\C) \mbox { and } P_E(\nu) = t\}$$
	for all $\mu$ in $M(\C)$ and $t$ in $T(E)$.

	Since $E^{c}$ is an open subset of $\C \setminus \{0,1\}$, each of its connected components is a hyperbolic Riemann surface. We index these components $X_i$ by a set $I$ of positive integers, and we form the product {\T} space of their disjoint union $E^{c}$; let $Teich(E^{c})$ denote this product {\T} space. Let $M(E)$ be the open unit ball in $L^{\infty}(E)$. Then, the product $Teich(E^{c}) \times M(E)$ is a complex Banach manifold. 
	
	\smallskip
	
	For $\mu$ in $L^{\infty}(\C)$, let $\mu|E^{c}$ and $\mu|E$ be the restrictions of $\mu$ to $E^c$ and $E$ respectively. We define the projection map $\widetilde P_E$ from $M(\C)$ to $Teich(E^c) \times M(E)$ by the formula
	
	\begin{equation}\label{projectionmap}
		\widetilde P_E(\mu) = \big(\Phi(\mu|E^{c}), \mu|E\big)
	\end{equation}
	for all $\mu$ in $M(\C)$, where $\Phi: M(E^{c}) \to Teich(E^{c})$ is the standard projection.
	
	\medskip
	
	We now state ``Lieb's isomorphism theorem."
	
	\begin{theorem}\label{thm10}
		For all $\mu$ and $\nu$ in $M(\C)$ we have $P_E(\mu) = P_E(\nu)$ if and only if $\widetilde P_E(\mu) = \widetilde P_E(\nu)$. Consequently, there is a well defined bijection $\theta: T(E) \to Teich(E^{c}) \times M(E)$ such that $\theta \circ P_E = \widetilde P_E$, and $T(E)$ has a unique complex manifold structure such that $P_E$ is a holomorphic split submersion and the map $\theta$ is biholomorphic.
	\end{theorem}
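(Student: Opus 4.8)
The plan is to reduce everything to the equivalence-of-relations assertion in the first sentence, since the bijection $\theta$ and the complex structure then follow formally from properties of $\widetilde P_E$ that are already available to us. First I would record that the linear map $\mu \mapsto (\mu|E^c, \mu|E)$ is an isometric isomorphism $L^{\infty}(\C) \cong L^{\infty}(E^c)\oplus L^{\infty}(E)$, the decomposition $\C = E^c \sqcup (E \cap \C)$ being a measurable partition, and that it restricts to a biholomorphism $M(\C) \to M(E^c) \times M(E)$ because the norm of $\mu$ is the maximum of the norms of its two restrictions. Composing with $\Phi \times \mathrm{id}$, where $\Phi: M(E^c) \to Teich(E^c)$ is the standard projection, exhibits $\widetilde P_E$ as $(\Phi \times \mathrm{id})$ precomposed with a biholomorphism; since $\Phi$ is a holomorphic split submersion by Corollary \ref{cor4}, so is $\widetilde P_E$. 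This structural fact is what I will feed into the last part.

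For the main equivalence, consider first the forward implication: assume $w^{\mu}$ and $w^{\nu}$ are $E$-equivalent, so $g := (w^{\mu})^{-1}\circ w^{\nu}$ is joined to the identity by an isotopy $H_t$ of $\hatC$ fixing $E$ pointwise for every $t$. The endpoint then fixes $E$ pointwise, i.e. $w^{\mu} = w^{\nu}$ on $E$; since both are $W^{1,2}_{loc}$ homeomorphisms agreeing on $E$, their distributional derivatives agree almost everywhere on $E$, whence $\mu = \nu$ a.e. on $E$, giving $\mu|E = \nu|E$. For the complementary part, fix a component $X_i$ of $E^c$. Because $H_t$ fixes $E$ and depends continuously on $t$ with $H_0 = \mathrm{id}$, the image $H_t(X_i)$ is a component of $\hatC \setminus E$ that cannot jump, so $H_t(X_i) = X_i$ for all $t$ and $g$ restricts to a self-map of $X_i$ isotopic to the identity rel $\partial X_i$. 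Translating this boundary-fixing isotopy into the statement that $w^{\mu}|_{X_i}$ and $w^{\nu}|_{X_i}$ represent the same point of $Teich(X_i)$ yields $\Phi_i(\mu|X_i) = \Phi_i(\nu|X_i)$ for every $i$, i.e. $\Phi(\mu|E^c) = \Phi(\nu|E^c)$; together these give $\widetilde P_E(\mu) = \widetilde P_E(\nu)$.

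The reverse implication is where the real work lies, and I expect it to be the main obstacle. Here one is handed the data $\mu|E = \nu|E$ and $\Phi_i(\mu|X_i) = \Phi_i(\nu|X_i)$ for each $i$, and must manufacture a single isotopy of $\hatC$, fixing $E$ pointwise throughout, from the identity to $(w^{\mu})^{-1}\circ w^{\nu}$. On each component the Teichm\"uller equivalence supplies a homotopy to a conformal map and hence a boundary-fixing isotopy of $X_i$; the difficulty is to assemble these countably many component isotopies into one globally continuous isotopy of the whole sphere that extends by the identity across $E$ and is continuous up to and across $\partial E$. Controlling the isotopies uniformly near $\partial E$, so that they glue to a genuine homeomorphism of $\hatC$ at each time $t$ and vary continuously in $t$, is the delicate step, and is essentially the content of Lieb's theorem; the careful treatment is carried out in \cite{Li} and \cite{EM}, on which I would rely. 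The measure-theoretic subtlety that $E$ may carry positive area, so that $M(E)$ is genuinely present, must be handled in tandem with the topological gluing.

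Granting the equivalence, the remainder is formal. Since $\widetilde P_E$ is constant on $E$-equivalence classes (forward implication) and distinguishes distinct ones (reverse implication), it descends through $P_E$ to a well-defined injection $\theta$ with $\theta \circ P_E = \widetilde P_E$; surjectivity of $\theta$ follows from that of $\widetilde P_E$, which holds because $\Phi$ is surjective and any pair $(\Phi(\sigma), \tau) \in Teich(E^c) \times M(E)$ is hit by the Beltrami coefficient equal to $\sigma$ on $E^c$ and $\tau$ on $E$, whose norm is $<1$ as a maximum of two norms $<1$. Finally I would transport the complex Banach manifold structure of $Teich(E^c) \times M(E)$ to $T(E)$ through $\theta$; by construction $\theta$ is then biholomorphic, and $P_E = \theta^{-1} \circ \widetilde P_E$ is a holomorphic split submersion because $\widetilde P_E$ is one and $\theta^{-1}$ is biholomorphic. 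Uniqueness of the structure is the standard argument: any two structures making $P_E$ a holomorphic split submersion admit local holomorphic sections of $P_E$, which show the identity map of $T(E)$ is biholomorphic between them.
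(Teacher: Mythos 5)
The first thing to note is that the paper does not actually prove this theorem: its entire ``proof'' is the citation ``See \S 7.10 in \cite{EM} for a complete proof.'' Your proposal, which defers the reverse implication to \cite{Li} and \cite{EM}, is therefore at bottom doing the same thing the paper does, and the formal bookkeeping you add is correct: the isometric identification $M(\C) \cong M(E^{c}) \times M(E)$, the conclusion that $\widetilde P_E$ is a holomorphic split submersion because $\Phi$ is one, the descent of $\widetilde P_E$ through $P_E$ to a bijection $\theta$, the transport of the complex structure, and the uniqueness argument via local holomorphic sections are all sound, and this is indeed how the second half of the statement follows formally from the first.

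However, the direction you claim to actually prove --- the forward implication --- contains a genuine gap at the step you call ``translating.'' From the isotopy rel $E$ you correctly obtain $w^{\mu} = w^{\nu}$ on $E$ (and the Sobolev argument giving $\mu|E = \nu|E$ a.e.\ is fine), and you correctly show that $g = (w^{\mu})^{-1} \circ w^{\nu}$ preserves each component $X_i$ and is isotopic to the identity on $X_i$ rel the topological frontier $\partial X_i \subset \hatC$, by an isotopy of mere homeomorphisms. But a point of $Teich(X_i)$ is an equivalence class defined rel the \emph{ideal boundary} of $X_i$, and ``isotopic to the identity rel the frontier in the sphere'' is not the same relation; bridging the two (via, e.g., the Earle--McMullen theory of quasiconformal isotopies or the conformal naturality of the Douady--Earle extension) is precisely the nontrivial content of Lieb's theorem, in \emph{both} directions, not only the reverse one that you flag as hard. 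So the forward direction that you present as essentially done in fact conceals the same difficulty you defer in the reverse direction. In short: your proposal correctly reduces the theorem to the bidirectional equivalence and handles all the formal consequences, but the equivalence itself --- forward as well as reverse --- still rests entirely on the cited sources, exactly as in the paper.
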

	
	See \S7.10 in \cite{EM} for a complete proof. 
	
	\begin{prop}\label{prop11}
		The {\T} and Kobayashi metrics on $T(E)$ are equal.
	\end{prop}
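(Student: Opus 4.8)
The plan is to prove the two inequalities $K_{T(E)}\le d_{T(E)}$ and $d_{T(E)}\le K_{T(E)}$ separately, where $K_{T(E)}$ is the Kobayashi metric for the complex structure supplied by Theorem~\ref{thm10}. The first is the routine direction: since $P_E\colon M(\C)\to T(E)$ is holomorphic (a holomorphic split submersion, by Theorem~\ref{thm10}), Proposition~\ref{prop1} gives $K_{M(\C)}(\mu,\nu)\ge K_{T(E)}(P_E(\mu),P_E(\nu))$ for all $\mu,\nu\in M(\C)$. Fixing $P_E(\mu)$ and letting $\nu$ run over the fiber $P_E^{-1}(t)$, every term on the left dominates the constant $K_{T(E)}(P_E(\mu),t)$, so passing to the infimum in the definition of $d_{T(E)}$ yields $d_{T(E)}(P_E(\mu),t)\ge K_{T(E)}(P_E(\mu),t)$.

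For the reverse inequality I would transport everything through Lieb's isomorphism $\theta$ of Theorem~\ref{thm10}. Writing a point of $T(E)$ as $\theta^{-1}(\tau,\lambda)$ with $(\tau,\lambda)\in Teich(E^{c})\times M(E)$, and using the natural identification $M(\C)=M(E^{c})\times M(E)$ given by $\mu\mapsto(\mu|E^{c},\mu|E)$, the relation $\theta\circ P_E=\widetilde P_E=\Phi\times\mathrm{id}$ shows that the fiber $P_E^{-1}(\theta^{-1}(\tau,\lambda))$ consists exactly of those $\nu$ with $\Phi(\nu|E^{c})=\tau$ and $\nu|E=\lambda$. Because the essential supremum over $\C$ is the maximum of the essential suprema over the disjoint pieces $E^{c}$ and $E$, the defining formula for $K_{M(\C)}$ factors as $K_{M(\C)}(\mu,\nu)=\max\{K_{M(E^{c})}(\mu|E^{c},\nu|E^{c}),\,K_{M(E)}(\mu|E,\nu|E)\}$, where $K_{M(E^{c})}$ and $K_{M(E)}$ are given by the analogous formulas on the corresponding balls.

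Now I would compute $d_{T(E)}$ directly. Since the restrictions $\nu|E^{c}$ and $\nu|E$ may be chosen independently, and since $\nu|E=\lambda$ is forced while $\nu|E^{c}$ ranges over $\Phi^{-1}(\tau)$, minimizing the maximum above separates as
\[
d_{T(E)}\big(P_E(\mu),\theta^{-1}(\tau,\lambda)\big)=\max\Big\{\inf_{\Phi(\nu')=\tau}K_{M(E^{c})}(\mu|E^{c},\nu'),\;K_{M(E)}(\mu|E,\lambda)\Big\}.
\]
The inner infimum is precisely the product Teichm\"uller metric $d_{Teich(E^{c})}(\Phi(\mu|E^{c}),\tau)$ (this fiberwise characterization is part of the product-Teichm\"uller-space machinery of \S7 of \cite{EM}), which by Lemma~\ref{lem1} equals $K_{Teich(E^{c})}(\Phi(\mu|E^{c}),\tau)$. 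Finally, the product property of the Kobayashi distance (Chapter~4 of \cite{Kob}) gives $K_{Teich(E^{c})\times M(E)}=\max\{K_{Teich(E^{c})},K_{M(E)}\}$, and since $\theta$ is biholomorphic this maximum is exactly $K_{T(E)}$. Comparing with the displayed identity yields $d_{T(E)}=K_{T(E)}$, which in particular supplies the missing inequality $d_{T(E)}\le K_{T(E)}$.

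The main obstacle is the separation-of-infima step: the fiber infimum of $K_{M(\C)}$ is an infimum of a supremum (over the components of $E^{c}$ together with the piece $E$), and exchanging it with the maximum requires that admissible competitors on $E^{c}$ and on $E$ be selected simultaneously inside the \emph{open} unit ball, i.e.\ with a uniform bound on their dilatations. This uniformity, together with the identification of the fiberwise infimum over $\Phi^{-1}(\tau)$ with the product Teichm\"uller metric, is exactly what the machinery of \cite{EM} provides; the Kobayashi product formula then matches the computation on the Kobayashi side.
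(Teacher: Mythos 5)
Your proof is correct, but the comparison is necessarily lopsided: the paper gives no argument for this proposition at all, deferring entirely to Proposition 7.30 of \cite{EM}, so what you have done is reconstruct the outsourced proof from the paper's own ingredients (Theorem \ref{thm10}, Lemma \ref{lem1}, and the ball formulas), which is almost certainly the route of \cite{EM} itself. Two refinements are worth recording. First, the one step you defer to ``the machinery of \cite{EM}''---that the fiber infimum $\inf\{K_{M(E^{c})}(\mu|E^{c},\nu'):\Phi(\nu')=\tau\}$ equals $d_{Teich(E^{c})}(\Phi(\mu|E^{c}),\tau)=\sup_{i}d_{i}(\Phi_{i}(\mu_{i}),\tau_{i})$---is the only point of real substance, and it can be closed directly rather than cited: the inequality $\inf\sup\geq\sup\inf$ is automatic, while conversely, choosing on each component a competitor $\nu'_{i}$ over $\tau_{i}$ with $K_{M(X_{i})}(\mu_{i},\nu'_{i})\leq d_{i}(\Phi_{i}(\mu_{i}),\tau_{i})+\epsilon$, the triangle inequality in $M(X_{i})$ gives $\tanh^{-1}\Vert\nu'_{i}\Vert_{\infty}\leq\tanh^{-1}\Vert\mu|E^{c}\Vert_{\infty}+\sup_{i}d_{i}(\Phi_{i}(\mu_{i}),\tau_{i})+\epsilon$, a bound independent of $i$ (and finite, by the definition of $Teich(E^{c})$), so the $\nu'_{i}$ assemble into a genuine element of the open ball $M(E^{c})$ lying over $\tau$. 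Note the uniformity issue lives entirely across the infinitely many components $X_{i}$, not between $E^{c}$ and $E$ as your closing paragraph partly suggests: the $E$-coordinate of the fiber is frozen at $\lambda$, and the maximum of two norms each below $1$ is below $1$. Second, you do not need the nontrivial half of the product formula $K_{M\times N}=\max\{K_{M},K_{N}\}$: your Step 1 already gives $K_{T(E)}\leq d_{T(E)}$, and the elementary inequality $K_{Teich(E^{c})\times M(E)}\geq\max\{K_{Teich(E^{c})},K_{M(E)}\}$ (holomorphic projections plus Proposition \ref{prop1}), combined with your fiber computation and Lemma \ref{lem1}, gives the reverse; this keeps the argument within facts the paper explicitly states.
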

	
	See Proposition 7.30 in \cite{EM}.

	\section{Proof of Theorem B}

	By Theorem A, the Carathéodory metric on $Teich(E^c)$ is complete, and it is well-known that the Carathéodory metric on $M(E)$ is complete. Therefore, by Proposition 6, $Teich(E^c) \times M(E)$ is also complete. It follows by Theorem 2, that the Carathéodory metric on $T(E)$ is complete. \qed
	
	\begin{remark}
		If $E$ is a finite set, then $T(E)$ is naturally identified with the classical {\T} space $Teich(\hatC \setminus E)$. This easily follows from 
		Theorem 2. A direct proof is given in Example 3.1 of \cite{M1}. Therefore, when $E$ is finite, Theorem B is exactly the main theorem in Earle's paper \cite{E1}. 
	\end{remark}
	
	\begin{remark}
		If $E = \hatC$, then $T(E)$ is naturally identified with $M(\C)$. In this case, the Carath\'eodory, Kobayashi, and Poincar\'e metrics coincide.
	\end{remark}
	
	\begin{remark}
		If the closed set $E$ has zero area, then $M(E)$ contains only one point, and $Teich(E^c) \times M(E)$ is isomorphic to $Teich(E^c)$ and in that case, Theorem B is a special case of Theorem A. 
		
	\end{remark}
	
	\bibliographystyle{amsalpha}

\end{document}